\documentclass[a4paper,oneside,11pt]{article}
\usepackage{amsmath}
\usepackage{amsfonts}
\usepackage{amssymb}
\usepackage{amsthm}
\usepackage{mathrsfs}
\usepackage{mathtools}
\usepackage{titlesec}
\usepackage{graphicx}
\usepackage{subfigure}
\usepackage{soul}
\usepackage{tocbibind}  % 自动处理目录项添加
\usepackage[thicklines]{cancel}
\usepackage{fancyhdr}
\usepackage{microtype}
\usepackage{float}
\usepackage{latexsym}
\usepackage{wasysym}
\usepackage{cite} 
\usepackage{color}
\usepackage{geometry}
\usepackage{extarrows}
\usepackage{enumerate}
\usepackage{dsfont}
\usepackage{bm}
\usepackage[marginal]{footmisc}
\usepackage[dvips]{epsfig}
\usepackage{amscd}
\usepackage[bookmarks,colorlinks,citecolor=blue,linkcolor=red]{hyperref}
\allowdisplaybreaks
\numberwithin{equation}{section}
\newtheorem{definition}{Definition}[section]
\newtheorem{theorem}{Theorem}[section]
\newtheorem{lemma}{Lemma}[section]
\newtheorem{remark}{Remark}[section]

\renewcommand{\thefootnote}{}

\title{Stability of admissible solutions for coexisting phase transitions for one-dimensional compressible van der Waals fluids}
\date{  }
\author{Yazhou C{\small HEN}$^1$, Qiaolin H{\small E}$^2$, Dongjuan N{\small IU}$^{3}$, Yi P{\small ENG}$^{1*}$, Xiaoding S{\small HI}$^{1}$ \\[3mm]
\scriptsize$^{1}$ {College of Mathematics and Physics, Beijing University of
Chemical Technology, Beijing 100029, China}\\
\scriptsize$^{2}$ {School of Mathematics, Sichuan University, Chengdu 610065,  China}\\
\scriptsize$^{3}$ {School of Mathematical Science, Capital Normal University, Beijing 100048, China }
}

\begin{document}
\maketitle
\renewcommand{\thefootnote}{\fnsymbol{footnote}}
\footnotetext[1]{{Corresponding author. }\\ {Email:  chenyz@mail.buct.edu.cn (Y.Chen), qlhejenny@scu.edu.cn (Q.He), djniu@cnu.edu.cn (D.Niu), apengyi@163.com (Y.Peng), shixd@mail.buct.edu.cn (X.Shi)}}

\begin{abstract}
In this paper, we investigate the dynamic stability of certain steady-state solutions to the periodic boundary value problem for compressible isentropic Navier-Stokes system under the van der Waals equation of state in one space dimension. 
These steady-state solutions correspond to the admissible solutions describing two-phase coexisting phase transitions, where the integral average of the specific volume belongs to the Maxwell region. 
We first construct a semi-discrete staggered grid difference scheme to prove the local existence of solutions to the periodic problem, without imposing the standard stability hypothesis \(p_v<0\). 
Then, by virtue of rigorous piecewise a priori estimates, we demonstrate that the periodic boundary value problem for van der Waals fluids possesses a global solution existing for all time, and this solution converges uniformly to the admissible steady state as time tends to infinity. This result firmly establishes the nonlinear stability of the admissible phase-transition solutions under general small initial disturbances. 

\end{abstract} 

\

\noindent{\bf Keywords:}   Compressible Navier-Stokes equations, van der Waals fluids, discontinuous solutions, phase transition, stability  

\

\noindent{\bf AMS subject classifications:} 35Q35; 35B65; 76N10; 35M10; 35B40; 35C20; 76T30

\section{Introduction}\label{sec:int}
\indent\qquad
The compressible Navier--Stokes equations form the fundamental continuum model describing the motion of viscous,  and stand as one of the central objects in the mathematical theory of nonlinear partial differential equations.
In one space dimension, the system reduces to a parabolic-hyperbolic coupled system, which not only captures essential physical phenomena such as shock waves, contact discontinuities, and phase transitions, but also allows for rigorous mathematical analysis that serves as a foundation for understanding multi-dimensional flows. Periodic boundary conditions model spatially homogeneous, closed flow systems, free of boundary layers and boundary fluxes, providing the cleanest analytical setting for nonlinear PDEs.

In this paper, we investigate the $2L$-periodic boundary value problem for isothermal compressible Navier-Stokes equations of van der Waals fluids in Lagrangian coordinates:
\begin{equation}\label{ns-lagrange}
\left\{\begin{array}{llll}
\displaystyle v_{t}-u_{x}=0,& (x,t)\in\mathbb{R}\times\mathbb{R}_+,\\
\displaystyle u_{t}+p(v)_{x}=\big(\frac{\epsilon u_{x}}{v}\big)_{x},  & (x,t)\in\mathbb{R}\times\mathbb{R}_+,\\
\displaystyle (v,u)(x-L,t)=(v,u)(x+L,t), & (x,t)\in\mathbb{R}\times\mathbb{R}_+,\\
\displaystyle  (v,u)(x,0)=(v_0,u_0)(x), & x\in\mathbb{R},
\end{array}\right.
\end{equation}
where $v(x,t)$ and $u(x,t)$ are unknown functions representing the fluid's specific volume and velocity, respectively, and $\epsilon$ is the viscosity coefficient, which is a positive constant in this paper. $x$ is the Lagrangian coordinate, so that $x=$constant corresponds to a particle path. 
The pressure $p=p(v)$ satisfies the van der Waals equation of state,
\begin{equation}\label{p-f}
\left(p+\frac{a}{v^2}\right)(v-b)=RT.
\end{equation}
Here  $T>0$ represents the temperature of the fluid. $R$ is the gas constant,  $a>0$ and $b>0$ are positive constants characterizing the effect of the molecular cohesive forces and the finite size of the molecules. 
A necessary condition is that the initial data be periodic:
\begin{equation*}
	(v_0,u_0)(x-L)=(v_0,u_0)(x+L),\qquad x\in\mathbb{R}.
\end{equation*} 

A critical physical constraint is that \(p(v)\) is well-defined only for \(v>b\). According to the van der Waals equation \eqref{p-f}, for temperatures in the range $\frac{a}{4bR}< T < T_c\triangleq \frac{8a}{27Rb}$, the corresponding isotherm on the $p$--$v$ diagram exhibits a non-monotonic segment characterized by a local maximum and a minimum (see Figure \ref{fig-p-v}). This ``hump'' signifies a region of mechanical instability where pressure increases with volume. In this region, the system becomes elliptic-hyperbolic mixed type, which breaks the strict hyperbolicity of ideal gas flows and allows the appearance of phase separation and interfaces.
Throughout this work, we operate under the isothermal assumption with $T < T_c$, a necessary condition for phase change. Under this condition, the pressure function 
\begin{equation}\label{p-vandw}
	p(v) = -\frac{a}{v^2}+\frac{RT}{v-b},\quad v>b,
\end{equation}
given by \eqref{p-f} exhibits the characteristic form shown in Figure \ref{fig-p-v} below:
\begin{enumerate}
\renewcommand{\labelenumi}{\roman{enumi})}
	\item $\displaystyle \lim_{v \to +\infty} p(v) = 0^+$.
	\item $p(v)$ is non-monotone and possesses exactly two critical points $\beta > \alpha > b$ such that $p'(\alpha) = p'(\beta) = 0$ and $p(v)$ is strictly decreasing on $(b, \alpha)$ and $(\beta, +\infty)$, and strictly increasing on $(\alpha, \beta)$.	
	\item There exist unique points $\alpha_0 \in (b, \alpha)$ and $\beta_0 \in (\beta, \infty)$ such that the Maxwell construction (i.e. Maxwell equal-area rule) \cite{M1875} holds:
	\begin{equation}\label{Maxwell equal area rule}
		\int_{\alpha_0}^{\beta_0} \big( p(v) - p(\alpha_0) \big) \, dv = 0,\quad p(\alpha_0)=p(\beta_0).
	\end{equation}
	From a physical perspective, the interval  $(\alpha_0,\beta_0)$ is known as the coexistence region, commonly referred to as the Maxwell region, and is denoted by
	\begin{equation}\label{M-R}
		\Omega_{\mathrm{Maxwell}} =(\alpha_0,\beta_0).
	\end{equation}
\end{enumerate}
\begin{figure}[htbp!]
\centering
\includegraphics[width=0.5\textwidth]{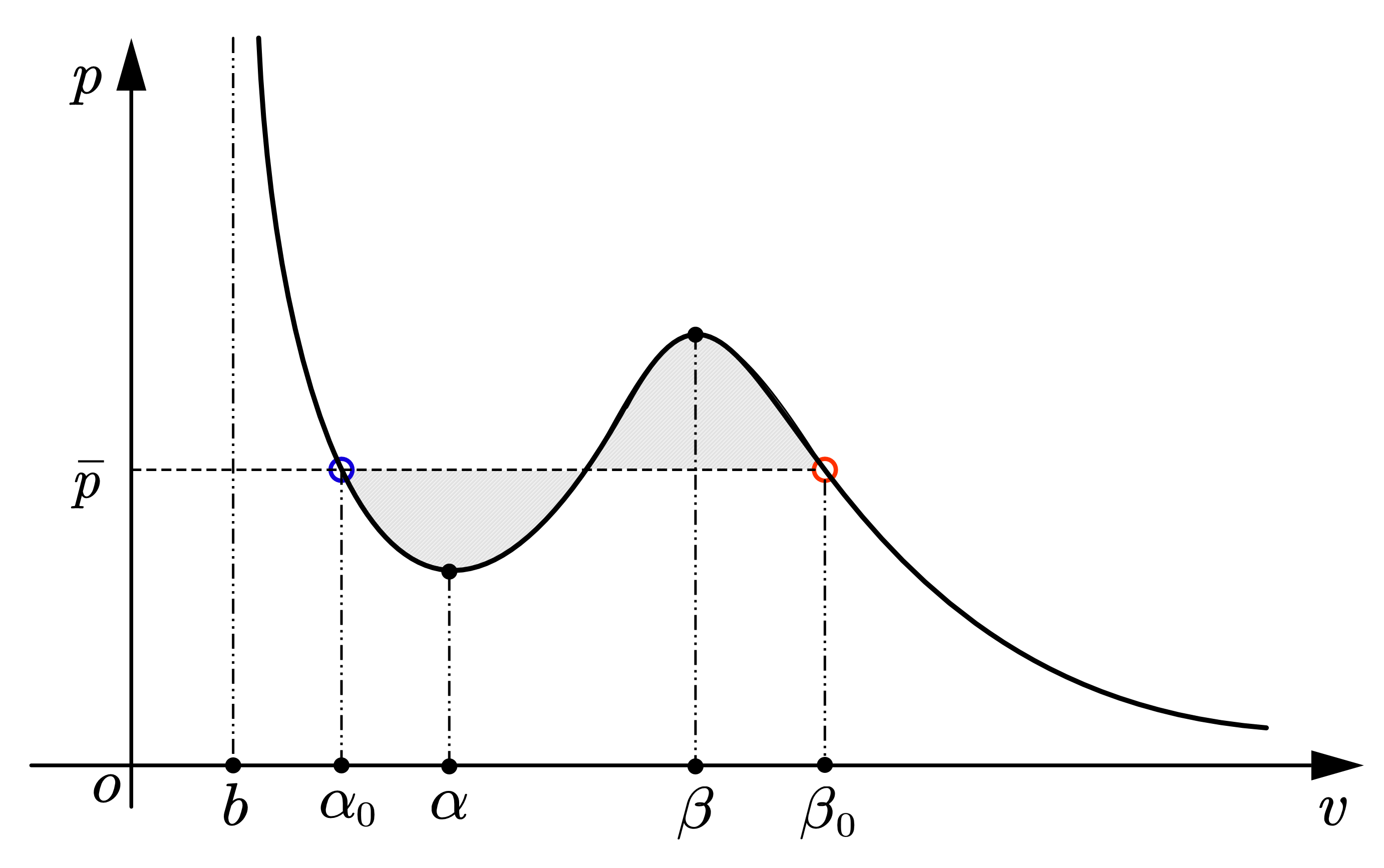}
\caption{Figure of $p-v$}
\label{fig-p-v}
\end{figure}

Numerical results from Hsieh-Wang \cite{HW1997} indicate that the interval $(b, +\infty)$ admits a decomposition into three regions: $\Omega_{\mathrm{unstable}}$, $\Omega_{\mathrm{metastable}}$, and $\Omega_{\mathrm{stable}}$ (see Figure \ref{fig-p-v}):
\begin{equation}\label{ABC}
\left.\begin{array}{llll}
\displaystyle \Omega_{\mathrm{unstable}}\triangleq(\alpha,\beta), & \text{unstable region},\\
\displaystyle \Omega_{\mathrm{metastable}}\triangleq(\alpha_0,\alpha]\cup[\beta,\beta_0),& \text{metastable region},\\
\displaystyle \Omega_{\mathrm{stable}}\triangleq(b,\alpha_0]\cup[\beta_0,+\infty),& \text{stable region}.
\end{array}\right.
\end{equation}
Based on the definition \eqref{ABC}, it immediately follows that the Maxwell region is exactly the union of the unstable and metastable regions, i.e.,
\begin{equation}\label{Maxwell region and unstable metastable}
  \Omega_{\mathrm{Maxwell}}=(\alpha_0,\beta_0)=\Omega_{\mathrm{unstable}}\cup \Omega_{\mathrm{metastable}}.
\end{equation}

Given that the periodic solutions $(v, u)(x, t)$ of the system \eqref{ns-lagrange} in 
$\mathbb{R}$ can be interpreted as $2L$-periodic extensions of their restrictions to \(\mathbb{T}=[-L,L]\), we hereafter restrict our analysis of the periodic problem \eqref{ns-lagrange} to the bounded interval $\mathbb{T}$. By integrating \eqref{ns-lagrange}$_1$ and  \eqref{ns-lagrange}$_2$ over the domain $\mathbb{T}\times[0,t]$ respectively,  and applying the periodic boundary condition specified in \eqref{ns-lagrange}$_3$, we derive the following requirement:
\begin{equation}\label{Hypo-0}
	\int_{\mathbb{T}}v(x,t)dx=\int_{\mathbb{T}}v_0(x)dx,\quad \int_{\mathbb{T}}u(x,t)dx=\int_{\mathbb{T}}u_0(x)dx.
\end{equation}
We define the integrated average of specific volume and velocity over the period $\mathbb{T}$ by
\begin{equation}\label{Hypo-1}
	\bar{v}\triangleq\frac{1}{2L}\int_{\mathbb{T}}v_0(x)dx,\quad
	\bar{u}\triangleq\frac{1}{2L}\int_{\mathbb{T}}u_0(x)dx.
\end{equation}

Suppose the initial data $(v_0, u_0)$ has a finite number of jump discontinuities, say at $x=y_i,~j=1,2,\dots,N_0$.
We now describe the jump conditions alluded to above. Suppose that \((v, u)\) is a solution to \eqref{ns-lagrange}, and which is smooth except on a curve \(x(t)\) whose speed is \(\dot{x}=s\), 
One easily checks that $(v, u)$ satisfies the Rankine-Hugoniot conditions:
\begin{equation}\label{rh-conditon1}
	-s[\![v]\!]=[\![u]\!], \quad s[\![u]\!]=[\![p - \frac{\epsilon u_{x}}{v}]\!],
\end{equation}
where $[\![\cdot]\!]$ denotes the jump in a quantity across the discontinuity curve. For example, $[\![v]\!] = v(x+, t) - v(x-, t)$. 
Now, the result of Hoff-Smoller \cite[Theorem 4.2]{hs1985} shows that the initial discontinuities in $u$ must be smoothed out in ${t >0}$, i.e.
$[\![u]\!] = 0$. It then follows that $s = 0$ and
\begin{equation}\label{jump0}
	[\![p(v) - \frac{\epsilon u_{x}}{v}]\!]=0.
\end{equation}
We thus expect that a discontinuity in $v_0$ at $x = y_i$ propagates in the solution along the particle path $x = y_i$.
% For functions \(g=g(w)\) we use the standard divided difference notation
% \begin{equation}\label{g-dd}
% 	g_{w_{1},w_{2}}=\begin{cases} \displaystyle \frac{g(w_{2})-g(w_{1})}{w_{2}-w_{1}},&\quad w_{1}\neq w_{2},\\ g'(w_{1}),&\quad w_{1}=w_{2}.\end{cases}
% \end{equation}
%Now suppose as above that the jump condition \eqref{jump0} hold across \(x = y_i\). 
Letting \(L = \ln v\), we then find from \eqref{ns-lagrange} and \eqref{jump0} that
\begin{equation}\label{eq:jump-alpha0}
{[\![   L]\!]  }_{t} = [\![  \frac{{u}_{x}}{v}]\!]   = \frac{1}{\epsilon }[\![  {p( {v}) }]\!]   = \frac{\alpha_i(t)}{\epsilon }[\![  L]\!],
\end{equation}
where 
\begin{equation}\label{eq:jump-alpha1}
 \alpha_i(t)=%\frac{p_{v-,v+}}{L_{v-,v+}}=
 \frac{p(v+)-p(v-)}{L(v+)-L(v-)}=\xi p'(\xi),
\end{equation}
and \({v}_{ \pm  } = v( {y_i\pm ,t}) \), $\xi$ is between $v+$ and $v-$.
Solving \eqref{eq:jump-alpha0} we then conclude that
\begin{equation}\label{eq:jump-alpha2}
[\![  {L( {y_i,t}) }]\!]   = [\![  {L( {y_i,0}) }]\!]  \exp \Big( {{\epsilon }^{-1}\int_{0}^{t}\alpha_i (s) {ds}}\Big).
\end{equation}
Given any small positive number $\underline{v}>0$ and $\hat{v}>\beta_0$, for $v\in [b+\underline{v}, \hat{v}]$, it is easy to get that
\begin{equation}\label{eq:vvv}
\begin{aligned}
\displaystyle & -C^{-1}\leq p'(v)\leq C,\quad %\\& 
-C^{-1}\leq vp'(v)\leq C,
\end{aligned}	
\end{equation}
where $C$ is a positive constant depending only on $a, b, \underline{v}$ and $\hat v$.
Therefore  \eqref{eq:jump-alpha2} shows that the magnitude of the discontinuity in \(L(v)\) can grow at most exponentially in time. 
To be precise, %if $v_\pm \in \Omega_{\mathrm{stable}}$, the jump discontinuity of $L(v)$ decays exponentially with time; 
if $v_\pm \in \Omega_{\mathrm{unstable}}$, the amplitude of the discontinuity in $L(v)$ exhibits at most exponential growth over time. In particular, whenever $p(v_+) = p(v_-)$, the magnitude of the discontinuity in $L(v)$ remains constant, yielding a stationary jump that corresponds exactly to the phase transition interface.

Let us now review the results concerning the well-posedness of solution for one-dimensional isentropic (or isothermal) Navier-Stokes equations \eqref{ns-lagrange}. For ideal isentropic flow, seminal contributions include global existence of solutions for Cauchy problem with smooth initial data by Kanel \cite{k1968}, and with nonsmooth initial data by Hoff \cite{hoff1986,hoff1987}, where solutions are obtained as limits of approximations obtained by building heuristic jump conditions into a semi-discrete difference scheme. For the one-dimensional non-isentropic flow governed by the ideal gas equation of state $pv = RT$, the global existence of solutions without smallness assumptions on the smooth initial data was established by Kanel \cite{K1979} and Kazhikhov \cite{K1982} for Cauchy problem and by Kazhikhov and Shelukhin \cite{ks1977} for initial-boundary value problem. 
Further related developments can be found in Jiang \cite{J1999} and  Li-Liang \cite{LL2016} among others. Regarding the asymptotic behavior of solutions, seminal contributions include the stability of rarefaction and shock waves established by Matsumura-Nishihara \cite{MN1985, MN1986}, and the stability of contact discontinuities proved by Huang-Matsumura-Shi \cite{HMS2004} and Huang-Li-Matsumura \cite{HLM2010}. Furthermore, the stability and viscosity limit of interacting shock waves were investigated by Huang-Wang-Wang-Yang \cite{HWWY2015} and Shi-Yong-Zhang \cite{SYZ2016}, respectively.

In the context of van der Waals fluid, Affouf-Caflisch \cite{AC1991} performed numerical studies on phase-transition solutions to the Cauchy problem of isothermal fluid systems equipped with viscosity and capillarity.  
Hsieh-Wang \cite{HW1997} adopted a pseudo-spectral method with artificial viscosity and numerically demonstrated that the non-monotonicity of pressure can induce phase transitions. Subsequently, He-Liu-Shi \cite{HLS2018} revisited this problem by employing a second-order TVD Runge-Kutta splitting scheme coupled with the Jin–Xin relaxation method. Furthermore, this approach was further extended to the compressible Navier–Stokes/Cahn–Hilliard and Navier–Stokes/Allen–Cahn systems in the studies of He-Shi \cite{hs2020} and \cite{hs2021}, respectively.
Hsiao \cite{HSIAO1990} constructed admissible shock and wave curves for the one-dimensional inviscid Riemann problem and further established a shock-rarefaction wave fan that conforms to prescribed initial data. 
For the non-isentropic nonideal gases, Hoff \cite{hoff1991} constructed local-in-time discontinuous weak solutions of the full Navier-Stokes equations for one-dimensional non-isentropic compressible flow with initial data of bounded variation. They also obtained detailed information concerning jump conditions and the evolution in time of the magnitudes of the jump discontinuities, which is similar as that in \eqref{eq:jump-alpha2} for the isothermal flow. %It shows that $[\![  \ln v]\!]$ can grow at most exponentially in time. 
In order to control the jump discontinuities pointwise for all time, Hoff \cite{hoff1992} assumed that $p$ and $T$ satisfy the conditions of a near ideal gas and the quantity $\alpha_i$ in \eqref{eq:jump-alpha2} satisfies $\alpha_i\leq -1/C$, and proved the global existence, uniqueness and continuous dependence on initial data for discontinuous solutions of the one-dimensional full Navier-Stokes equations. 
Later, Hoff-Khodja \cite{hk1993} proved the dynamic stability of certain steady-state solutions of Cauchy problem for compressible non-isothermal van der Waals fluids. These steady-state solutions consist of two constant states, corresponding to different phases, separated by a convecting phase boundary. They also require that the fluid be near-ideal in a neighborhood of each of the constant states, similarly as in \cite{hoff1992}. 
In addition, relevant studies have investigated the well-posedness of solutions for capillary fluid equations with Korteweg terms by Eden-Milani-Nicolaenko \cite{EMN1993}, viscous-capillary wave systems by Zumbrun \cite{Z2000}, and non-monotone dynamic elastic models by Mei-Wong-Liu \cite{MLW2007,MLW2007-2}.
%Eden-Milani-Nicolaenko \cite{EMN1993} applied semigroup methods to investigate the periodic van der Waals Navier–Stokes equations with Korteweg capillary terms. Separately, Zumbrun \cite{Z2000} analyzed the Cauchy problem for the viscous–capillary p-system and derived the linear orbital stability of its traveling wavefront solutions. For dynamic elastic bar theory characterized by a similarly non-monotone stress–deformation relation, Mei-Wong-Liu \cite{MLW2007, MLW2007-2} proved the existence and convergence of global strong solutions to the periodic boundary value problem for one-dimensional Navier–Stokes system augmented with artificial viscosity terms.

The main purpose of this paper is to investigate the asymptotic stability of a class of admissible steady-state solutions related to liquid-vapor phase transitions for the periodic boundary value problem of one-dimensional isothermal compressible van der Waals fluids. 
It is known that the steady-state solution may describe the long-time asymptotic behavior of the unsteady flow 
%Without loss of generality, we assume that the viscosity coefficient $\epsilon=1$ 
, and the steady-state problem of system \eqref{ns-lagrange} under constraint \eqref{Hypo-0} reduces to the following system of ordinary differential equations:
\begin{equation}\label{steady-state ns-lagrange}
\left\{\begin{array}{llll}
\displaystyle -\tilde{u}_{x}=0,& x\in \mathbb{T},\\
\displaystyle p(\tilde{v})_{x}=\big(\frac{\epsilon\tilde{u}_{x}}{\tilde{v}}\big)_{x}, & x\in \mathbb{T}, %\\ \displaystyle  (\tilde{v},\tilde{u})(-L)=(\tilde{v},\tilde{u})(L),%\\\displaystyle (v_x,u_x)(-1)=(v_x,u_x)(1),
\end{array}\right.
\end{equation}
supplemented by the integral constraint
\begin{equation}\label{constraint for v,u}
\frac{1}{2L}\int_{\mathbb{T}}\tilde{v}(x)dx = \bar v.
\end{equation}
Obviously, \(\tilde{u}\) is a constant, without loss of generality, we may set \(\tilde{u}= 0\).
For ideal and near-ideal fluids with $p_v < 0$, the only such steady-state solutions are those for which all dependent variables $\tilde{v}$ and $\tilde{u}$ are constant. Nonconstant steady-state solutions do exist for van der Waals fluids because the isotherms for such fluids need not be monotone. 

Our prior work \cite{chnys2026} proposed an approximation to system \eqref{steady-state ns-lagrange} via artificial viscosity, with the purpose of capturing energy-minimizing equilibrium configurations, the corresponding approximate system is given below,
\begin{equation}\label{Approximate ns-lagrange}
\left\{\begin{array}{llll}
\displaystyle -\tilde{u}_{x}=\varepsilon^2 \tilde{v}\tilde{v}_{xx},& x\in \mathbb{T},\\
\displaystyle p_{x}(\tilde{v})=\big(\frac{\epsilon\tilde{u}_{x}}{\tilde{v}}\big)_{x},& x\in \mathbb{T},
\end{array}\right.
\end{equation}
with constraint \eqref{constraint for v,u}. 
Within an energy framework that accounts for interfacial contributions, it is therefore expected that the minimal-energy two-phase periodic solution will exhibit a double-interface structure. This expectation is confirmed by numerical studies of the one-dimensional periodic problem \eqref{steady-state ns-lagrange} for a van der Waals fluid (see Hsieh-Wang \cite{HW1997}
and references therein). When the average initial density $\bar v$ lies in the Maxwell region, i.e.,
\begin{equation}\label{eq:thm-condition-add}
	\bar v\in(\alpha_0,\beta_0),
\end{equation}
we prove that the approximate solution converges, as the artificial viscosity tends to zero, to the equilibrium states given by Maxwell’s construction, with the diffuse interface sharpening into a discontinuity (see \cite[Theorem 1.3]{chnys2026}). 
We therefore define this limit as the admissible solution of the steady-state equation \eqref{steady-state ns-lagrange}, that is, the solution with physical significance.
The limit, illustrated in Figure~\ref{fig-vv}, is either single-peak or single-valley, each profile corresponds to a piecewise smooth solution of system \eqref{steady-state ns-lagrange}. 
\begin{figure}[htbp!]
\centering
\includegraphics[width=0.8\textwidth]{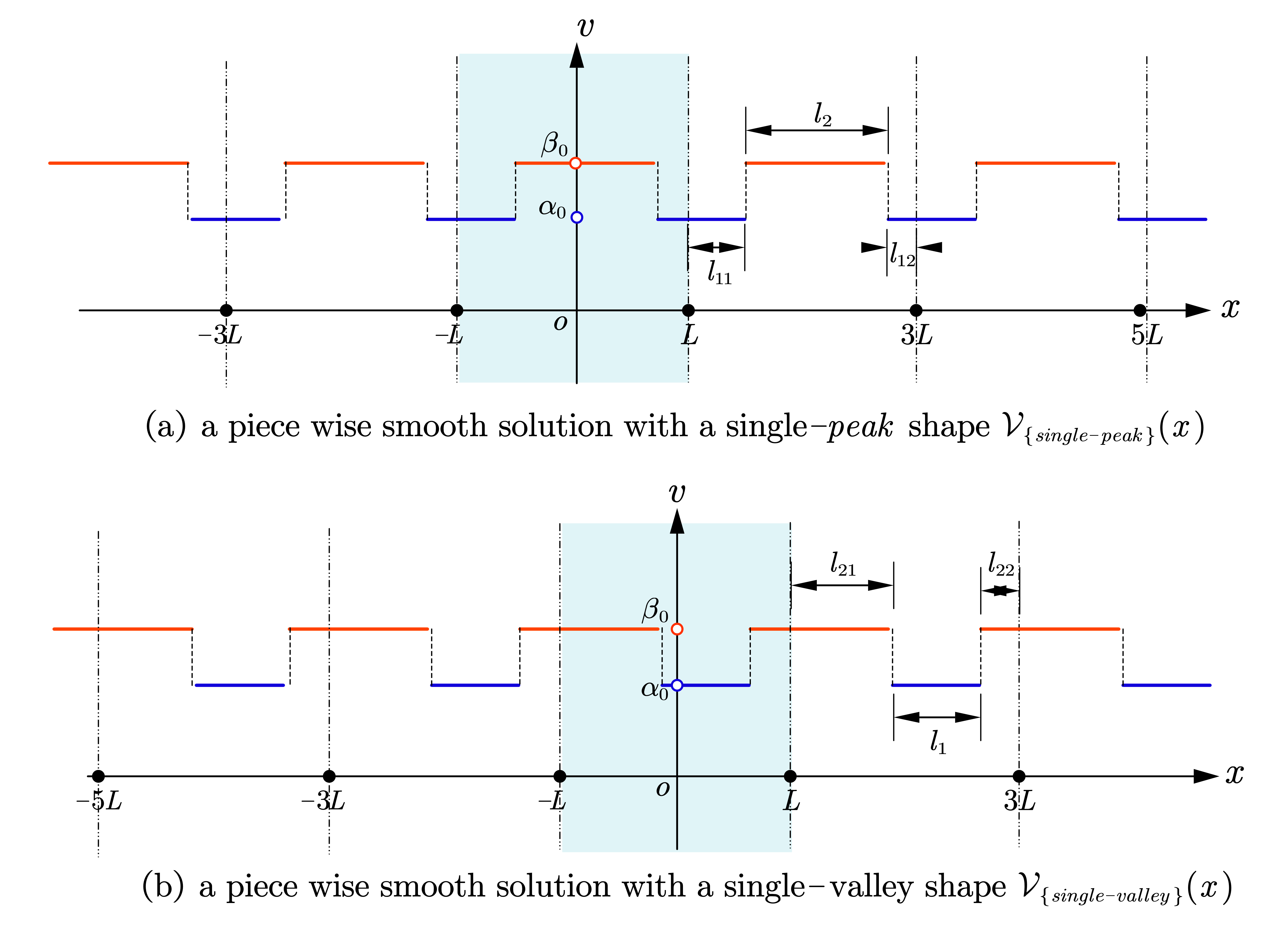}
\caption{phase-transition solution with two distinct interfaces}
\label{fig-vv}
\end{figure}

We consider the admissible steady-state solutions known as the phase-transition solution with two distinct interfaces in \cite{chnys2026}, which are defined precisely as follows.
\begin{definition}\label{phase transition solution with two distinct interfaces}
 A function whose profile is either single-peaked or single-dipped and which constitutes a piecewise smooth solution of system \eqref{steady-state ns-lagrange} is called a phase transition solution with two distinct interfaces. It is denoted by $\mathscr{V}_{\mathrm{single\text{-}peak}}(x)$ or $\mathscr{V}_{\mathrm{single\text{-}valley}}(x)$, and defined explicitly as follows (see Figure~\ref{fig-vv}):
\begin{equation}\label{Phase-transition-solution-1}
  \mathscr{V}_{\mathrm{single\text{-}peak}}(x)=\begin{cases}
  \alpha_0, & x\in[-L,-L+l_{11}),\\
  \beta_0, & x\in (-L+l_{11},-L+l_{11}+l_2),\\
  \alpha_0, & x\in(-L+l_{11}+l_2, L],
\end{cases}
\end{equation}
or
\begin{equation}\label{Phase-transition-solution-2}
  \mathscr{V}_{\mathrm{single\text{-}valley}}(x)=\begin{cases}
  \beta_0, & x\in[-L,-L+l_{21}),\\
  \alpha_0, &  x\in (-L+l_{21},-L+l_{21}+l_1),\\
  \beta_0, &  x\in(-L+l_{21}+l_1, L],
\end{cases}
\end{equation}
where the lengths $l_1$ and $l_2$ are given by
\begin{equation}\label{lii}
  l_1 = l_{11}+l_{12}=\frac{2L(\beta_0-\bar{v})}{\beta_0-\alpha_0},\qquad 
  l_2 = l_{21}+l_{22}=\frac{2L(\bar{v}-\alpha_0)}{\beta_0-\alpha_0}.
\end{equation}
\end{definition}

\begin{remark}
According to the definition in \cite{chnys2026} and letting ${(v^\varepsilon_{\text{single-peak}}, u^\varepsilon)}_{\varepsilon>0}$ (or ${(v^\varepsilon_{\text{single-valley}}, u^\varepsilon)}_{\varepsilon>0}$) be a family of classical solutions to the approximate system \eqref{Approximate ns-lagrange} on $\mathbb{R}$. If ${(v^\varepsilon_{\text{single-peak}}, u^\varepsilon)}_{\varepsilon>0}$ (or ${(v^\varepsilon_{\text{single-valley}}, u^\varepsilon)}_{\varepsilon>0}$) converges almost everywhere to a pair $(v, u)$ as $\varepsilon \to 0^+$, then the limit $(v, u)$ is called an admissible solution of the steady-state system \eqref{steady-state ns-lagrange}. It is shown in \cite{chnys2026} that the limit pair \((v,u)\) corresponds to a phase-transition solution possessing two distinct interfaces denoted by $\mathscr{V}_{\mathrm{single\text{-}peak}}(x)$ or $\mathscr{V}_{\mathrm{single\text{-}valley}}(x)$ in Definition \ref{phase transition solution with two distinct interfaces}. 
This admissible solution possesses a double-interface structure characteristic of coexisting liquid-vapor phase transitions.
\end{remark}

\begin{remark}
While the measures of the two-phase regions $l_1$ and $l_2$ can be determined, the precise locations $-L+l_{11}$ or $-L+l_{21}$ at which the phase transition occurs cannot be identified with the techniques employed. However, it is worth pointing out that by performing a coordinate translation on $x$, the phase transition solutions with two distinct interfaces $\mathscr{V}_{\mathrm{single\text{-}peak}}(x)$ and $\mathscr{V}_{\mathrm{single\text{-}valley}}(x)$ can be transformed into each other. Therefore, it only needs to investigate the stability of the steady-state solution $\mathscr{V}_{\mathrm{single\text{-}peak}}(x)$. 
\end{remark}

In addition, this paper is devoted to the existence of weak solutions to the aforementioned periodic boundary value problem, where the initial specific volume is assumed to possess finitely many jump discontinuities, and the pressure is allowed to cover the non-monotone region. 
The study of weak solutions to the compressible Navier–Stokes equations with a van der Waals-type pressure law is both physically motivated and mathematically essential, particularly in the context of phase transitions. Unlike classical ideal gas models, the van der Waals equation captures the coexistence of liquid and vapor phases, leading to the physically realistic expectation that the specific volume will develop jump discontinuities across phase boundaries. 
Weak solutions are defined in the obvious way as follows.
\begin{definition}\label{def:weak}
	A pair \((v,u)\) is a weak solution to the system \eqref{ns-lagrange} if $v, u, p(v)$ and $\dfrac{\epsilon u_x}{v}$ are locally integrable, and the weak equations 
	\[
	\int_\mathbb{T} v(x,t)\phi(x,t)dx-\int_\mathbb{T} v_0\phi(\cdot,0)dx-\int_0^t\int_\mathbb{T} \left( v\phi_t - u\phi_x \right)dx dt = 0,
	\]
	and
	\[
	\int_\mathbb{T} u(x,t)\psi(x,t)dx- \int_\mathbb{T} u_0\psi(\cdot,0)dx -\int_0^t\int_\mathbb{T} \left( u\psi_t +p(v)\psi_x - \frac{\epsilon u_x}{v}\psi_x \right)dx dt  = 0,
	\]
hold for all test functions $\phi,\psi\in C^{1}\bigl(\,[0,\infty);\,C_{c}^{1}(\mathbb{T})\,\bigr).$
\end{definition}

Without loss of generality, by an appropriate coordinate translation, we make $\mathscr{V}_{\mathrm{single\text{-}peak}}(x)$ take the following form, see Figure~\ref{fig-vv2} (denoting $\mathscr{V}_{\mathrm{single\text{-}peak}}(x)$ simply as $\tilde{v}(x)$):
\begin{equation}\label{steady-sol}
\tilde{v}(x) = 
\begin{cases}
\alpha_0, & x\in I_1 \triangleq \{x|-L<x<-l\}, \\
\beta_0, &  x\in I_2 \triangleq\{x|-l<x<l\}, \\
\alpha_0, & x\in I_3 \triangleq\{x|~l<x<L\},
\end{cases}
~~~\tilde{u}(x)=0,
\end{equation}
where $l=\dfrac{L(\bar{v}-\alpha_0)}{\beta_0-\alpha_0}$.
Then $\tilde{p}=p({\tilde{v}})=p(\alpha_0)=p(\beta_0)$, so that $(\tilde{v}, \tilde{u})$ satisfies \eqref{ns-lagrange} away from the lines $x = \pm l$. 
\begin{figure}[htbp!]
\centering
\includegraphics[width=0.5\textwidth]{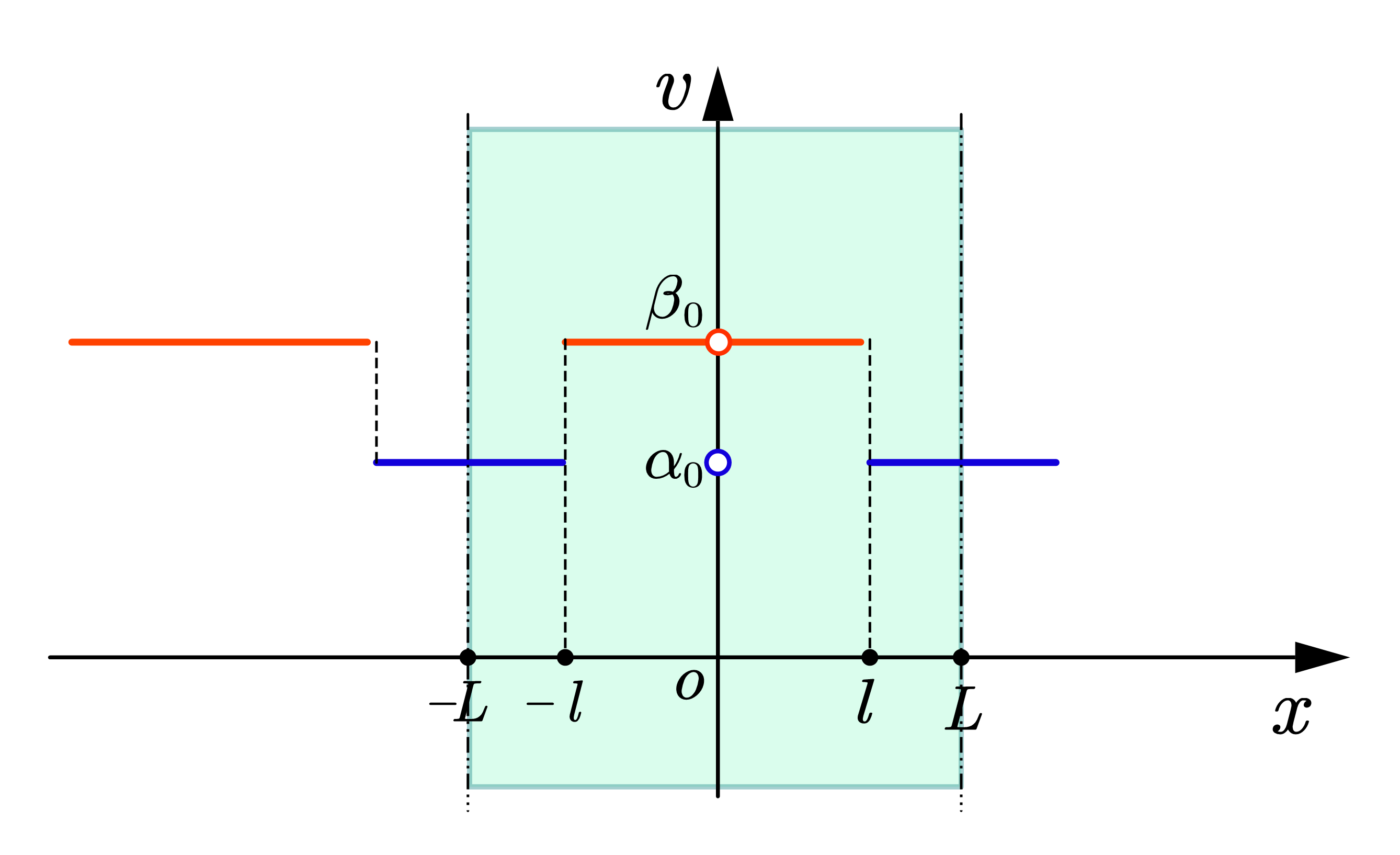}
\caption{steady-state solution with two distinct interfaces}
\label{fig-vv2}
\end{figure}

One easily checks that $(\tilde{v}, \tilde{u})$ also satisfies the Rankine-Hugoniot conditions \eqref{jump0}.
% \begin{equation}\label{jump1}
%   [u] = 0, \quad [p - \frac{u_{x}}{v}] = 0,
% \end{equation}
% for a jump discontinuity convecting with the flow. $[\cdot]$ denotes the magnitude of a jump; for example, $[u] = u(x+, t) - u(x-, t)$. 
The admissible steady-state solution $(\tilde{v}, \tilde{u})$ is then indeed a weak solution of \eqref{ns-lagrange} in the space $\{x|~x\neq l \}$. The lines $x =\pm l$ represent the convecting phase boundaries across which $\tilde{u}, \tilde{p}$ are continuous, but $\tilde{v}$ is discontinuous. Observe that the magnitudes of the jumps in $\tilde{v}$ is not small in any sense and are independent of $t$ and $\epsilon$. This represents a significant qualitative difference from the case of Navier-Stokes flow for near-ideal fluids, in which small convective discontinuities decay to zero exponentially in time, more rapidly for smaller viscosities. 

We now describe our hypothesis on the initial data $(v_0,u_0)$. $(\tilde{v},\tilde{u})$ is the admissible steady-state solution defined in \eqref{steady-sol}. Letting \(\parallel  \cdot  \parallel\) denote the usual \(L^{2}\left( \mathbb{T}\right)\)-norm, we define
\begin{equation}\label{eq:C0}
	C_{0}= \|(v_0-\tilde{v},u_0-\tilde{u})\|^{2}+ \left(\textrm{Var}~u_{0}\right)^{2}+ \left\|v_{0x}\right\|_{L^2_\Sigma}^{2},
\end{equation}
where $\pm l$ are distinguished points at which discontinuities in $v$ occur, and $\|\cdot\|_{L^2_\Sigma}$%\sout{$\Vert$} 
be the piecewise $L^2$ norm defined, for example, by
$$
\| v_x(\cdot, t)\|_{L^2_\Sigma}^2 \triangleq \sum_{i=1}^3\int_{I_i} v_x(x, t)^2 d x, %+ \int_{I_2} v_x(x, t)^2 d x+ \int_{I_3} v_x(x, t)^2 d x.
$$
which is meaningful when the discontinuous variable $v(\cdot, t)$ piecewise smooth function in $x$. 
We require that the fluid be near-ideal in a neighborhood of each of the points $\alpha_0$ and $\beta_0$. Specifically, we define the neighborhood $D_\alpha$ and $D_\beta$ by
\begin{equation}\label{eq:dd}
	D_\alpha=\{v\big|~|v-\alpha_0|<{\hat{\delta}},~v\in (b,\alpha)\},~~
	D_\beta=\{v\big|~|v-\beta_0|<{\hat{\delta}},~v\in (\beta,\hat{v})\},
\end{equation}
where $\hat{\delta}>0$ is a given positive constant. It is easy to check that
\begin{equation}\label{p-assume2}
  p'(v) < 0~~~\text{ in }~D_\alpha \cup D_\beta.
\end{equation}
Next, we fix a number $\theta \in (0,1)$,  let $\sigma = \min\{1, t\}$, and define the following functionals for solutions $(v, u)$ of \eqref{ns-lagrange}:
\begin{align}
	A(t_0) &= \sup_{0 \leq t \leq t_0} \left[\|(v-\tilde{v}, u-\tilde{u})(\cdot, t)\|^2 + \|v_x(\cdot, t)\|_{L^2_\Sigma}^2\right] + \int_0^{t_0} \|u_x(\cdot, t)\|^2d t, \label{eq:A} \\
	B(t_0) &= \sup_{0 < t \leq t_0} \sigma^{1/2} \|u_x(\cdot, t)\|^2 
	+ \int_0^{t_0} \sigma^{1/2+\theta} \left(\|u_t(\cdot, t)\|^2 + \|\left(\frac{u_x}{v}\right)_x(\cdot, t)\|_{L^2_\Sigma}^2\right) d t, \label{eq:B}\\
	%D(t_0) &= \sup_{0 < t \leq t_0} \|u_x(\cdot, t)\|^2	+ \int_0^{t_0} \left(\|u_t(\cdot, t)\|^2 + \left(\frac{u_x}{v}\right)_x(\cdot, t)\|_{L^2_\Sigma}^2\right) d t, \label{eq:D}\\
	F(t_0) &= \sup_{0 < t \leq t_0} \left[\sigma^{3/2} \left(\|u_t(\cdot, t)\|^2 + \|\left(\frac{u_x}{v}\right)_x(\cdot, t)\|_{L^2_\Sigma}^2\right)\right] + \int_0^{t_0} \|u_{xt}(\cdot, t)\|^2  d t. \label{eq:F}
\end{align}
%We also define $D(t_0)$ and $G(t_0)$ to be the same as $B(t_0)$ and $F(t_0)$, but with $\sigma$ taken to be identically one. 
The definitions of these functionals are exactly the same as those in \cite{hoff1991} and \cite{hoff1992}. The only difference is that we remove from $A(t)$ a term that measures the magnitude of the jump discontinuity of the specific volume $v(\cdot, t)$. This term is omitted because the entire analysis in this paper is based on the estimates that show that $A, B,$ and $F$ remain small for all time. In contrast, the jump in $v$ at $x = \pm l$ is not small in any sense.

We can now state the main result of this paper.

\begin{theorem}\label{thm1} Let $C_0, D_\alpha, D_\beta$ be as described above in \eqref{eq:C0}-\eqref{eq:dd}, $(\tilde{v},\tilde{u})$ is the admissible steady-state solution defined in \eqref{steady-sol}.  Given any small positive number $\underline{v}>0$ and $\hat{v}>\beta_0$. Then there are positive constants $\delta_0,\tau$ and $C$ such that, given the initial data $(v_0,u_0)$  satisfying \eqref{Hypo-1}, \eqref{eq:thm-condition-add} and
\begin{align}
	& v_0\in [b+2\underline{v},\hat{v}-\underline{v}],~~a.e., \label{eq:vv-b0}\\
	& \int_{I_i}(v_0-\tilde{v})dx=0,~i=1,2,3,\label{eq:add-b0} \\	
&C_0%= \|(v_0-\tilde{v},u_0-\tilde{u})\|^{2}+ (\textrm{Var}~ u_0)^2+ \left\|(v_{0})_x\right\|_{L^2_\Sigma}^{2}
	\leq \delta_0, \label{eq:e0-b0}
\end{align}	
then the periodic boundary problem \eqref{ns-lagrange} has a global weak solution $(v,u)$ %with the initial data $(v_0,u_0)$
 satisfying
\begin{align}
&v \in  [b+\underline{v},\hat{v}], &&  0\leq t\leq \tau,\label{eq:vv-b11}\\
&v \in 
\begin{cases}
D_\alpha, & x \in I_1 \cup I_3, \\
D_\beta, & x \in I_2,
\end{cases}  && t\geq \tau, \label{eq:vv-b1}\end{align}
\begin{align} \displaystyle A(t)+B(t)+F(t)\leq CC_0. \label{eq:e0-b1}
\end{align}
Moreover, the solution $(v,u)$ tends to $(\tilde{v},\tilde{u})$ as $t\rightarrow\infty$ in the sense that
\begin{equation}\label{large-time}
\lim_{t\rightarrow\infty}\|(v-\tilde{v},u-\tilde{u})(\cdot,t)\|_{L^\infty(\mathbb{T})}=0.
\end{equation}
\end{theorem}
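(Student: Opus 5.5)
The plan follows the Hoff framework of \cite{hoff1991,hoff1992,hk1993}: local existence, then a priori bounds on $A+B+F$ valid as long as $v$ stays in the two phase neighborhoods, closed by a continuation argument, and finally large-time convergence from the dissipation integrals.

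\textbf{Step 1: local existence.} First I will build a local weak solution on $[0,\tau]$ from a semi-discrete staggered-grid scheme. Grid points will be placed so that $x=\pm l$ are cell boundaries, and the discrete analogue of the jump condition \eqref{jump0} will be built into the scheme. Since $v_0\in[b+2\underline v,\hat v-\underline v]$, \eqref{eq:vvv} gives $|p'|,|vp'|\le C$. By \eqref{eq:jump-alpha2}, jumps of $\ln v$ then grow at most exponentially, so no sign condition on $p'$ is needed over short times. For $\tau$ small, depending only on $\underline v,\hat v$, this yields $v\in[b+\underline v,\hat v]$ and $A(\tau)+B(\tau)+F(\tau)\le CC_0$. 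These bounds come from the usual energy, $\sigma^{1/2}\|u_x\|^2$ and $\sigma^{3/2}\|u_t\|^2$ estimates, using $\mathrm{Var}\,u_0$ to handle the initial layer. Compactness (equicontinuity of $u$ in $C^{1/2}$, and piecewise $BV$/$H^1$ bounds on $v$ on each $I_i$) then passes to a weak solution in the sense of Definition \ref{def:weak}.

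\textbf{Step 2: $v$ enters the phase neighborhoods.} The pointwise representation of $v$ is the key. Set $F_{\rm eff}=\epsilon u_x/v-(p(v)-\tilde p)$, which is continuous across $x=\pm l$. The equation gives $\epsilon(\ln v)_t=F_{\rm eff}+p(v)-\tilde p$ along particle paths. Integrating the momentum equation gives
\[
F_{\rm eff}(x,t)=\frac{d}{dt}\int_{x_0}^x u\,dy+\text{(mean terms)},
\]
so $\int_{t_1}^{t_2}F_{\rm eff}$ is bounded by $C A^{1/2}$, while $\|F_{\rm eff}\|_\infty$ is controlled for $t\ge\tau$ by $\|F_{\rm eff}\|^{1/2}\|F_{{\rm eff},x}\|^{1/2}$. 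Next, \eqref{eq:add-b0} is preserved in time because $s=0$ and $\int_{I_i}v_t=[u]_{\partial I_i}$. Combined with the smallness of $\|v_x\|_{L^2_\Sigma}$, this gives
\[
\sup_{I_i}|v-\tilde v|\le C\|v_x\|_{L^2_\Sigma}+C\Big|\int_{I_i}(v-\tilde v)\,dx\Big|\le C(A^{1/2}+|\text{flux}|).
\]
So $v$ is within $CC_0^{1/4}$ of $\alpha_0$ on $I_1\cup I_3$ and of $\beta_0$ on $I_2$, hence in $D_\alpha$ and $D_\beta$ once $\delta_0$ is small. This proves \eqref{eq:vv-b1} at $t=\tau$.

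\textbf{Step 3: global a priori estimates.} On any interval where \eqref{eq:vv-b1} holds, I will prove $A+B+F\le CC_0+C(A+B+F)^{3/2}$ with $C$ independent of $t$. The basic energy uses the relative entropy $\Phi(v)=\int_{\tilde v}^{v}(\tilde p-p(s))\,ds$, which is positive on $D_\alpha$ and $D_\beta$ by \eqref{p-assume2}. The Maxwell rule \eqref{Maxwell equal area rule} is what makes the two phases share the common pressure level $\tilde p$, so no boundary terms arise at $x=\pm l$. For $\|v_x\|_{L^2_\Sigma}$, I will test the equation $\epsilon(\ln v)_{xt}=u_t+p(v)_x$ on each $I_i$ against $(\ln v)_x$ and use $p'<0$ for damping. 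The dissipation of $\|v-\tilde v\|^2$ comes from the Poincaré inequality on each $I_i$, where zero mean is available by preservation of \eqref{eq:add-b0}. The $B$ and $F$ estimates are Hoff's weighted ones, repeated piecewise; the interface terms vanish because $u$ and $\epsilon u_x/v-p$ are continuous. A continuity argument then closes the bound globally and keeps $v$ in the neighborhoods.

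\textbf{Step 4: convergence and main obstacle.} Since $\int_0^\infty\big(\|u_x\|^2+\|v-\tilde v\|^2\big)dt<\infty$ and the time derivatives are bounded, the $L^2$ norms tend to zero, and interpolation with the $L^2_\Sigma$ gradient bounds gives \eqref{large-time}. I expect the main obstacle to be Step 3's dissipation for $v-\tilde v$. Non-small jumps mean every integration by parts produces interface terms, and these must be shown either to cancel via \eqref{jump0} or to be controlled. The interval means $\int_{I_i}(v-\tilde v)\,dx$ must be shown to stay zero (or decay), since otherwise the equilibrium could shift. My first attempt will be to use $u(\pm l,t)$ and show these means are conserved exactly.
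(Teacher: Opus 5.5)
You follow the paper's architecture: a Hoff-type staggered scheme for local existence; the entropy estimate, piecewise damping of $v_x$ from $p'<0$, and $u_x$, $u_t$ estimates whose interface terms cancel by \eqref{jump0}; then continuation. But there is a genuine error at exactly the point you flag as the main obstacle: \eqref{eq:add-b0} is \emph{not} preserved. Integrating $v_t=u_x$ and using periodicity,
\[
\frac{d}{dt}\int_{I_2}(v-\beta_0)\,dx=u(l,t)-u(-l,t)=-\frac{d}{dt}\int_{I_1\cup I_3}(v-\alpha_0)\,dx,
\]
which is nonzero in general. For example, take $v_0=\tilde v$ and a small smooth $u_0$ with $u_0(l)\neq u_0(-l)$. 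The condition $s=0$ fixes the \emph{mass} of each phase in Lagrangian coordinates, not its volume. Only the total $\int_{\mathbb{T}}(v-\tilde v)\,dx=0$ persists. Consequently:
\begin{itemize}
\item the piecewise zero-mean Poincar\'e inequality you plan to use in Step 3 is unavailable;
\item your ``first attempt'' in Step 4 will fail;
\item $\int_0^\infty\|v-\tilde v\|^2dt<\infty$, the key input of your convergence argument, is unproved.
\end{itemize}
Step 2 and the closure of $A+B+F$ do not need this. Since $|\int_{I_i}(v-\tilde v)dx|\le C\|v-\tilde v\|$, Sobolev on each $I_i$ already places $v$ in $D_\alpha$, $D_\beta$; that is all the paper uses, and the effective-flux detour is unnecessary. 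Lemma~\ref{lem-apriori} closes without any time-integrated $\|v-\tilde v\|^2$. Note that the paper does not supply a valid substitute either. In its convergence step it writes $\int_{I_1}(v-\alpha_0)dx=\int_{I_1}(v_0-\alpha_0)dx+\int_{I_1}u_x\,dx$, which drops the time integral $\int_0^t(u(-l,s)-u(-L,s))\,ds$ and so has the same defect.

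The missing idea is that the phase volumes are fixed asymptotically by pressure balance together with total mass, not by a conservation law. Let $\phi(x,t)=\int_{-L}^x(v-\tilde v)\,dy$. It is periodic by total-mass conservation, and $\phi_t=u(x,t)-u(-L,t)$. Since $\tilde p$ is constant and \eqref{jump0} holds, $u_t=\big(\epsilon u_x/v-(p(v)-\tilde p)\big)_x$ weakly on all of $\mathbb{T}$. Testing with $\phi$ gives
\[
-\int_{\mathbb{T}}(p(v)-\tilde p)(v-\tilde v)\,dx=-\frac{d}{dt}\int_{\mathbb{T}}u\phi\,dx+\int_{\mathbb{T}}u\big(u-u(-L,t)\big)\,dx-\int_{\mathbb{T}}\frac{\epsilon u_x}{v}(v-\tilde v)\,dx.
\]
The integrand on the left is $\ge c(v-\tilde v)^2$ pointwise, because $\tilde p$ is the common pressure and $p'<0$ on $D_\alpha\cup D_\beta$. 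Integrating in time gives $\int_0^\infty\|v-\tilde v\|^2dt\le CC_0$. Since $|\frac{d}{dt}\|v-\tilde v\|^2|\le 2\|v-\tilde v\|\,\|u_x\|$ is integrable, $\|v-\tilde v\|\to0$. Interpolation with the bounded $\|v_x\|_{L^2_\Sigma}$ then gives uniform convergence on each $I_i$. With this replacement your Step 4 works, and it is cleaner than the paper's route through $g_i(t)\to0$.

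Finally, $\int_{\mathbb{T}}u\,dx$ is conserved, so $u\to\bar u$. You need $\bar u=0$ for two reasons: for \eqref{large-time}, and for the bound $\int_{\mathbb{T}} u(u-u(-L,t))\,dx\le C\|u_x\|^2$ used above. Alternatively, set $\tilde u:=\bar u$ via the Galilean invariance $u\mapsto u-\bar u$.
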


\begin{remark}
Theorem \ref{thm1} shows that for any $\bar v \in \Omega_{\mathrm{Maxwell}}$, the periodic problem for system \eqref{ns-lagrange} admits a global solution converging uniformly to the admissible steady-state solution as $t\to+\infty$.  This admissible solution possesses a double-interface structure characterized by two separate density jumps corresponding to liquid–vapor phase transitions. It is shown that such admissible solution is nonlinearly stable under sufficiently small perturbations of the initial data.
Conversely, if $\bar v$ lies outside the Maxwell region, the solution remains in a single-phase state.
Our results indicate that if the left and right limits of $v$ at a discontinuity coincide with the Maxwell points $\alpha_0$ and $\beta_0$, the amplitude of the jump discontinuity in $v$ remains constant, yielding a stationary jump that corresponds exactly to the phase transition interface. 
These findings demonstrate that the thermodynamic instability within the Maxwell regime serves as the driving force triggering phase separation.
\end{remark}

\begin{remark}\label{rem:1}
The regularity \eqref{eq:e0-b1} is more than sufficient for those integrals to be defined in Definition \ref{def:weak}. Indeed, \eqref{eq:e0-b1} implies certain H\"older regularity for the solution \((v,u)\). Specifically, we employ the standard notation for H\"older norms
\[\langle w\rangle_A^{a,b}=\sup_{(x,t),(y,s)\in A}\frac{|w(x,t)-w(y,s)|}{|x-y|^a+|t-s|^b}.\]
The following regularity results are consequences of \eqref{eq:e0-b1}:
	\begin{align}
		&\langle v\rangle_{I_i\times[0,\infty)}^{\frac{1}{2},\frac{1}{2}-\frac{\theta}{4}}\leq CC_{0}^{\frac{1}{2}},\quad i=1,2,3,\label{eq:thm1-v1}	\\	
		&\sigma^{\frac{1}{4}+\frac{\theta}{2}}\langle u\rangle_{\mathbb{T}\times[t,\infty)}^{\frac{1}{2},\frac{1}{4}} \leq CC_{0}^{\frac12},\label{eq:thm1-u1} \\
		&\sigma^{\frac{1}{2}+\frac{\theta}{4}}\|u_{x}(\cdot,t)\|_{L^{\infty}(\mathbb{T})} \leq CC_{0}^{\frac{1}{2}}, \label{eq:thm1-u2}\\
		&\sigma^{\frac{3}{4}+\frac{\theta}{2}}\left\langle\frac{\epsilon u_{x}}{v}-p\right\rangle_{\mathbb{T}\times[t,\infty)}^{\frac{1}{2},\frac{1}{4}} \leq CC_{0}^{\frac12},\label{eq:thm1-up}
	\end{align}
where $\sigma = \min\{1, t\}$ and $\theta \in (0,1)$. More comprehensive discussions concerning the H\"older regularity of solutions are delivered throughout the proof of local solutions in Section \ref{sec:local}.
\end{remark}

\begin{remark}
Compared with Hopf’s results \cite{hoff1991,hoff1992} on the existence of discontinuous solutions to the Cauchy problem for non-isothermal gases, our work imposes much weaker restrictions on the initial data. Specifically, we remove the smallness assumptions on the \(L^1\)-norm of initial perturbations and the magnitude of the jump discontinuity of the specific volume $v(\cdot, t)$, i.e., $\sum|[v_0(y_i)]|$. As a matter of fact, the magnitude of the jump discontinuities at $x=\pm l$ we investigate herein will not be small.
Taking advantage of the properties of periodic boundary conditions and the structural features of isothermal gases, we establish the global existence and long-time behavior of solutions to the perturbed equation under the initial condition \eqref{eq:vv-b0} and \eqref{eq:e0-b0}.
\end{remark}

\begin{remark}
We emphasize that even if the volume-averaged integral lies within the metastable regime, smooth solutions to system \eqref{Approximate ns-lagrange} are still non-unique. In particular, there exists at least one solution featuring two jump discontinuities; moreover, solutions with $2N$ jumps can emerge, where the integer $N$ is governed by the period length and the value of the volume-averaged integral (see \cite{chnys2026} for details). Solutions with $2N$ jump discontinuities represent multi-phase coexisting phase transition configurations. Importantly, we can also establish the asymptotic stability of these $2N$-jump solutions under small initial perturbations. Guided by the energy-minimizing principle, we restrict our attention to the well-posedness and long-time asymptotic behavior of two-smooth-interface phase transition solutions corresponding to the case $N=1$. 
\end{remark}
   
The rest of this paper is arranged as follows. In Section \ref{sec:local}, we will give the local existence of weak solution for   \eqref{ns-lagrange} with discontinuous initial data.  In Section \ref{sec3},  we will present the desired a priori estimates on the local solutions for Cauchy problem \eqref{ns-lagrange},  then give the proof of the main theorem.

\section{Local existence of discontinuous solution}\label{sec:local}
\indent\qquad
In this section, we will give the local existence of the discontinuous solution for the system  \eqref{ns-lagrange}.
Following the seminal method in \cite{hoff1991}, we construct a semi-discrete staggered grid difference scheme, derive uniform time-weighted energy estimates to overcome the initial singularity at \(t\to0^+\).
We state a local existence result.

\begin{theorem}%[Local Existence]
\label{thm:loc}
	Let $C_0, \underline{v}$ and $\hat{v}$ be as in Theorem \ref{thm1}, and  $M$ be a given positive number. Then there is a positive constant $C=C(\underline{v}, \hat{v}, M)$ and a time $t_0$, such that if the initial data \((v_0,u_0)\) satisfying
	\begin{align}
		& v_0\in [b+2\underline{v},\hat{v}-\underline{v}], ~~a.e., \label{eq:v-b0}\\
		& C_0 \leq M, \label{eq:c0-b0} %\\&[p(v_0)-\frac{\epsilon u_{0x}}{v_0}]=0,  \quad \text{at ~}x=\pm l, \label{eq:jump-b0}
	\end{align}
	%\begin{equation}
	%	[p(v)-\frac{\epsilon u_x}{v}]=0,  \quad \text{at each~}y_i,~~i=1,2,\dots,N_0.
	%\end{equation}
	the problem \eqref{ns-lagrange} has a local weak solution \((v,u)\) defined up to time $t_0$ satisfying
	\begin{align}
		& v\in [b+\underline{v},\hat{v}], ~~a.e., \label{eq:v-b1}\\
		& A(t_0)+B(t_0)+F(t_0)\leq C C_0, \label{eq:c0-b1}
	\end{align}
	and the solution has the following regularity:
	\begin{align}
		&\langle v\rangle_{I_i\times[0,t_{0}]}^{\frac{1}{2},\frac{1}{2}-\frac{\theta}{4}}\leq CC_{0}^{\frac{1}{2}},\quad i=1,2,3.\label{eq:loc-v1}\\		
		&\langle u\rangle_{\mathbb{T}\times[\tau,t_{0}]}^{\frac{1}{2},\frac{1}{4}} \leq CC_{0}^{\frac12}\tau^{-\frac{1}{4}-\frac{\theta}{2}},\label{eq:loc-u1} \\
		&\|u_{x}(\cdot,t)\|_{L^{\infty}(\mathbb{T})} \leq CC_{0}^{\frac{1}{2}}t^{-\frac{1}{2}-\frac{\theta}{4}}, \label{eq:loc-u2}\\
		&\left\langle\frac{\epsilon u_{x}}{v}-p\right\rangle_{\mathbb{T}\times[\tau,t_{0}]}^{\frac{1}{2},\frac{1}{4}} \leq CC_{0}^{\frac12}\tau^{-\frac{3}{4}-\frac{\theta}{2}}.\label{eq:loc-up} 
	\end{align}
In particular, \eqref{eq:loc-v1} and \eqref{eq:loc-up} show that \(v\) and \(u_{x}\) have one-sided limits at \(x=\pm l\), and that the jump condition \eqref{jump0} holds. Finally
	\begin{equation}\label{eq:jump-loc}
		[\ln v(\pm l,t)]=[\ln v(\pm l,0)]\exp\Big(\epsilon^{-1}\int_{0}^{t_0}\alpha_{\pm}(s)ds\Big),
	\end{equation}
	where \(\alpha_{\pm}\) is given by \eqref{eq:l3}.
\end{theorem}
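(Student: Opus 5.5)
We follow Hoff's semi-discrete approach \cite{hoff1991}. Fix a mesh size $h=2L/N$ chosen so that the interface points $\pm l$ lie on cell boundaries (perturbing $l$ by $O(h)$ if necessary and passing to the limit at the end). On the staggered grid we place $v_j(t)$ at cell centers and $u_{j+1/2}(t)$ at nodes, and solve the periodic ODE system
\[
\dot v_j=\frac{u_{j+1/2}-u_{j-1/2}}{h},\qquad
\dot u_{j+1/2}=\frac{\sigma_{j+1}-\sigma_j}{h},\qquad \sigma_j=\frac{\epsilon}{v_j}\,\frac{u_{j+1/2}-u_{j-1/2}}{h}-p(v_j),
\]
with initial data given by the cell averages of $(v_0,u_0)$. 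Since $p$ is smooth on $(b,\infty)$, this finite-dimensional system has a local solution for each $h$ as long as $v_j$ stays in $[b+\underline v,\hat v]$. Because it is written in divergence form for $\sigma$, the jump condition \eqref{jump0} is built in automatically at every node, including the nodes at $\pm l$.

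\textbf{Uniform estimates.} The core of the argument is to show that the discrete analogues of $A,B,F$ satisfy $A+B+F\le CC_0$ on a time interval $[0,t_0]$ whose length depends only on $\underline v,\hat v,M$ and not on $h$. We derive these by discrete versions of the following estimates:
\begin{itemize}
\item The basic energy estimate: multiply by $u-\tilde u$ and use $v-\tilde v$. Here the sign of $p'$ plays no role; only a Gronwall bound $e^{Ct}$ is needed, and this is harmless for short times.
\item The $L^2_\Sigma$ bound on $v_x$. We differentiate the momentum equation in the form $(\epsilon\ln v)_{xt}=u_t+p_x$ piecewise on each $I_i$ and test with $(\ln v)_x$. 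Because differences are never taken across the nodes at $\pm l$, this works without any smallness of the jump.
\item The time-weighted estimates $B$ and $F$. We test with $\sigma^{1/2}u_{xx}$-type differences and with $\sigma^{3/2}\dot u_t$, handling the $t\to0^+$ singularity through the weights $\sigma^{1/2+\theta}$, as in \cite{hoff1991}.
\end{itemize}

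\textbf{Closing the bounds on $v$.} The pointwise bounds on $v$ follow from the representation $\epsilon\ln v_j(t)=\epsilon\ln v_j(0)+\int_0^t(\sigma+p)\,ds$. We combine this with the bound $\|\sigma\|_{L^\infty}\lesssim \|\sigma\|^{1/2}\|\sigma_x\|^{1/2}+\|\sigma\|$ and the time-integrability $\int_0^t\|\sigma_x\|_{L^\infty}\,ds\le C C_0^{1/2}t^{\gamma}$, which comes from $B$. This keeps $v$ within $\underline v$ of its initial range for $t\le t_0$, and a continuation argument closes the a priori bounds.

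\textbf{Compactness and passage to the limit.} Next we interpolate the discrete solutions and derive the Hölder estimates \eqref{eq:loc-v1}--\eqref{eq:loc-up} uniformly in $h$:
\begin{itemize}
\item For $v$ on each $I_i$, spatial $C^{1/2}$ regularity follows from the $L^2_\Sigma$ bound on $v_x$. Temporal Hölder regularity follows from $v_t=u_x$ together with the weighted $L^\infty$ bound \eqref{eq:loc-u2}, which is integrable since $\tfrac12+\tfrac\theta4<1$.
\item For $u$ and $\sigma$, Hölder continuity in $x$ follows from $u_x\in L^2$ and $\sigma_x=u_t\in L^2$ (with weights). Hölder continuity in $t$ follows by the standard averaging trick over an interval of length $|t-s|^{1/2}$.
\end{itemize}
By Arzelà–Ascoli, a subsequence converges uniformly on compact sets of $I_i\times[0,t_0]$ for $v$, and of $\mathbb{T}\times(0,t_0]$ for $u$ and $\sigma$. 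The weak formulation of Definition~\ref{def:weak} passes to the limit, and the bounds \eqref{eq:v-b1}--\eqref{eq:c0-b1} are preserved by lower semicontinuity.

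\textbf{Interface behaviour.} The uniform continuity of $\sigma$ across $\pm l$, together with the one-sided continuity of $v$, gives one-sided limits and the condition $[\sigma]=0$, which is \eqref{jump0}. Then the computation \eqref{eq:jump-alpha0}, carried out rigorously along $x=\pm l$ using the identity $\epsilon(\ln v)_t=\sigma+p$ evaluated on each side, yields the ODE $[\ln v]_t=\epsilon^{-1}\alpha_\pm[\ln v]$. Integrating this ODE gives \eqref{eq:jump-loc}.

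\textbf{Main obstacle.} The step I expect to be hardest is obtaining the $h$-uniform weighted estimates for $B$ and $F$ near $t=0$ with discontinuous $v$ and only $\textrm{Var}\,u_0$ control. In particular, one must commute the discrete differences with $1/v_j$ across the interface cells. The $L^2_\Sigma$ structure must be respected there: the $\sigma_x$ terms are global, but the $v_x$ terms are piecewise. To handle this I would follow \cite{hoff1991} closely, estimating the extra commutator terms by $\|\sigma\|_{L^\infty}$ and absorbing them with Gronwall.
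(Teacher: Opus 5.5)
Your architecture matches the paper's: a staggered semi-discrete scheme, a $v_x$ estimate that never differences across $\pm l$, pointwise control of $v$, H\"older bounds with Arzel\`a--Ascoli, and the jump ODE at $\pm l$. Your use of the effective flux $\sigma$ is actually cleaner than the paper's route. Since $\delta\sigma_k=\dot u_k$ at every node, including $\pm N_*$, you get $\|u_x\|_{L^\infty}$ and $[\sigma]=0$ without tracking $[\delta u/v]_{\pm N_*}$. One fix in that step: you need $\|\sigma+\tilde p\|_{L^\infty}\lesssim\|\sigma+\tilde p\|^{1/2}\|u_t\|^{1/2}+\|\sigma+\tilde p\|$. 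Nothing controls $\|\sigma_x\|_{L^\infty}=\|u_t\|_{L^\infty}$.

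\textbf{The gap.} The leading term of $B$, $\sup_t t^{1/2}\|u_x(\cdot,t)\|^2\le CC_0$, cannot be obtained by testing with $t^{1/2}u_{xx}$ (or $t^{1/2}u_t$). Any such weighted energy identity leaves a multiple of $\int_0^t s^{-1/2}\|u_x\|^2\,ds$ on the right. But $C_0$ controls $u_0$ only through $\|u_0\|$ and $\mathrm{Var}\,u_0$, and the energy estimate only gives $\int_0^t\|u_x\|^2\,ds$. This integral really is infinite for admissible data. For the heat equation with a step as initial datum, $\|u_x(s)\|^2\sim s^{-1/2}$, so $\sup_s s^{1/2}\|u_x\|^2$ is finite while $\int_0^1 s^{-1/2}\|u_x\|^2\,ds$ diverges. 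Energy methods do not see $\mathrm{Var}\,u_0$. This, rather than the commutators with $1/v_j$ in the interface cells, is the real obstacle near $t=0$.

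\textbf{How the paper gets it.} The paper obtains this bound by duality.
\begin{enumerate}
\item It pairs the momentum equation with $\delta\psi$, where $\psi$ solves the backward parabolic problem \eqref{back-para} with terminal value $\delta u(t)/\|\delta u(t)\|$.
\item Energy estimates for $\psi$ give $\mathscr E\le C$ when $\mathscr A t^{1/2}$ is small, hence $\|\psi(0)\|_\infty\le Ct^{-1/4}$.
\item The initial pairing $\sum_j\psi_j(0)\delta u_j(0)h$ is therefore at most $Ct^{-1/4}\sum_j|\delta u_j(0)|h$. This is exactly where $\mathrm{Var}\,u_0$ enters.
\item The interface nodes contribute only $\int_0^t\|\delta\psi\|_\infty\,|[p_{\pm N_*}]|\,ds$, which is controlled through the jump ODE \eqref{eq:l4}.
\end{enumerate}
Only once $\sup t^{1/2}\|\delta u\|^2$ is known does the $t^{1/2+\theta}$-weighted identity \eqref{eq:t-beta} close. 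Its term $\beta s^{\beta-1}\|\delta u\|^2$ is then bounded by $C\mathscr B\,t^{\theta}$, and the $F$ bound with its $t^{3/2+\theta}$ weight is built on that. You need to insert this duality step, or an equivalent linear parabolic smoothing argument, before your weighted estimates.

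\textbf{A smaller point of the same kind.} In the piecewise $v_x$ estimate, the term $\int_0^t\int_\Sigma(\ln v)_x u_t$ cannot be handled by Cauchy--Schwarz, because $\int_0^t\|u_t\|^2$ is not available without a weight. Integrate by parts in time and reuse $\epsilon(\ln v)_{xt}=u_t+p_x$, as in the derivation of \eqref{eq:vx0}.
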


\begin{remark}
We restrict our attention to the well-posedness of two-smooth-interface phase transition solutions, which contain exactly two jump discontinuities. In fact, we can also establish the local existence of solutions for cases with multiple discontinuities. Specifically, suppose the initial data $(v_0, u_0)$ possess finitely many jump discontinuities located at $x=y_i$, $i=1,2,\dots,N_0$. The desired result still holds if we merely replace the condition \eqref{eq:c10} with 
\begin{equation*}
\widetilde{\mathscr{C}}=\big(\mathop{\sum }\limits_{i}|[v_{y_i}](0)|\big)^{2}.
\end{equation*}
\end{remark}

\subsection{Difference approximations}
\indent\qquad
In this subsection, we establish the existence of certain semidiscrete difference approximations of solutions to the problem \eqref{ns-lagrange} and derive the a priori energy estimates required for extracting limiting solutions as the discretization tends to zero.

To begin, let \(N\in\mathbb{N}\) be a positive integer, and define the mesh size \(h=L/N\). The grid points are given by
\[
x_k=kh,~~ (k=0,\pm 1,\dots,\pm N), \quad x_j=jh,~~ (j=\pm\frac12,\pm\frac32,\dots, \pm(N-\frac12)).
\]
We use a staggered layout as \(u_k(t)\triangleq u(x_k,t)\) represents the velocity at cell vertices \(x_k\) and \(v_j(t)\triangleq v(x_j,t)\) represents the specific volume at cell centers \(x_{j}\).
By periodicity, \(u_k(t)=u_{2N+k}(t)\) and \(v_{j}(t)=v_{2N+j}(t)\). 

We define the difference operator \(\delta\) by
\[\delta w_l \triangleq \frac{w_{l+\frac12}-w_{l-\frac12}}h,\]
for \(l=k\) or \(j\), and let $N_*$ will be the integer nearest $l/h$, so that $x_{\pm N_* -1/2}\leq {\pm l}\leq x_{\pm N_* +1/2}$.
Approximations $(v_j,u_k)(t)$ are then computed from the ordinary differential equations
\begin{equation}\label{eq:scheme}\left\{
\begin{aligned} 
\displaystyle  &\dot{v_j}=\delta u_j,&&\quad j=\pm \frac12,\pm \frac32,\dots,\pm(N-\frac12),\\
&\dot{u_k}=-\delta p_k+\delta\left(\frac{\epsilon \delta u}{v} \right)_k, &&\quad k=0,\pm 1,\dots,\pm N,
\end{aligned}\right.
\end{equation}
where $p_j=p(v_j)$ and $\dot{w}\triangleq \dfrac{\mathrm{d}w}{\mathrm{d}t}$.  The periodic boundary conditions imply \(u_{-N}=u_N\) and \(v_{-N+1/2}=v_{N+1/2}\). 

We assume that initial values $(v_j(0),u_k(0))$ have been given and let $(\bar{v},\bar{u})$ be as in \eqref{Hypo-1}. In analogy with \eqref{eq:C0}, we define
\begin{equation}\label{eq:c00}
\begin{aligned}
  \mathscr{C}_{0}\triangleq &\sum_{k} (u_{k}(0)-\bar u)^{2}h+\sum_{j}(v_{j}(0)-\bar v)^{2}h+\Big(\sum_{j}|\delta u_{j}(0)|h\Big)^{2}%\\  &
  +\sum_{k^{\prime}}\delta v_{k}(0)^{2}h,
\end{aligned}
\end{equation}
and
\[\sum_{k}w_{k}\triangleq\sum_{k=-N}^{N-1}w_{k},\quad\sum_{k'}w_{k}\triangleq\sum_{k=-N,~k\neq \pm N_*   }^{N-1}w_{k},\quad\sum_{j}w_{j}\triangleq\sum_{j-\frac12=-N}^{N-1}w_{j}%,\quad\sum_{j}w_{j}\triangleq\sum_{i=1}^{N_0}[w]_{k_i}
.\]
We also define
\begin{equation}\label{eq:c10}
%{\mathscr{C}}_{1} = \mathop{\sum }\limits_{j}\delta {u}_{j}{\left( 0\right) }^{2}h,\quad\quad 
\widetilde{\mathscr{C}}=\big(|[v_{N_*}](0)|+|[v_{-N_*}](0)|\big)^{2},
\end{equation}
where \([w_{\pm N_* }]\triangleq w_{\pm N_* +\frac12}-w_{\pm N_* -\frac12}\).

It will be useful to adopt the following quantities, which are the discrete analogs of the norms \(A\), \(B\), and \(F\) defined in \eqref{eq:A}-\eqref{eq:F}:
\begin{align*}
\mathscr{A}(t_0) &= \sup_{0\leq t\leq t_0}\Big(\sum_{j} (v_j-\bar v)(t)^{2}h+\sum_{k}(u_k-\bar u)(t)^2h+ \sum_{k'}\delta v_{k}(t)^2h%\\&\quad\quad\quad\quad+\big(|[v]_{k_{+}}(0)+|[v]_{k_{-}}(0)|\big)^2
\Big) +\int_{0}^{t_{0}} \sum\limits_{j}\delta u_{j}(t)^{2}h dt,\\
\mathscr{B}(t_0) &= \sup_{0\leq t\leq t_{0}} t^{\frac{1}{2}}\sum_{j}\delta u_{j}(t)^{2}h +\int_{0}^{t_{0}}t^{\frac{1}{2}+\theta}\Big(\sum_{k}\dot{u}_{k}(t)^{2}h+\sum\limits_{k^{\prime}}\delta\left(\frac{\delta u}{v}\right)_{k}(t)^{2}h\Big)dt, \\
%\mathscr{D}(t_0) &= \sup_{0\leq t\leq t_{0}}  \sum_{j}\delta u_{j}(t)^{2}h +\int_{0}^{t_{0}} \Big(\sum_{k}\dot{u}_{k}(t)^{2}h+\sum\limits_{k^{\prime}}\delta\left(\frac{\delta u}{v}\right)_{k}(t)^{2}h\Big)dt, \\
\mathscr{F}(t_{0})& =\sup_{0\leq t\leq t_{0}} t^{\frac{3}{2}+\theta}\Big(\sum_{k}\dot{u}_{k}(t)^{2}h+\sum_{k^{\prime}}\delta\left(\frac{\delta u}{v}\right)_{k}(t)^{2}h\Big)+\int_{0}^{t_{0}} t^{\frac{3}{2}+\theta}\sum\limits_{j}\delta\dot{u}_{j}(t)^{2}h dt.
\end{align*}
Recall that $\theta$ is an arbitrarily small positive constant satisfying \(0<\theta<1\).
We also adopt the following notation to describe semidiscrete H\"{o}lder continuity: given a sequence \(\{w_{k}(t)\}_{k\in\mathbb{Z}}\) and a set \(S\) in \((x,t)\)-space, define
\[\langle w_{k}\rangle_{S}^{a,b}=\sup_{(x_{k},t),(x_{k^{\prime}},t^{\prime})\in S}\frac{|w(x_{k},t)-w(x_{k^{\prime}},t^{\prime})|}{|x_{k}-x_{k^{\prime}}|^{a}+|t-t^{\prime}|^{b}}.\]
The same notation applies in an obvious way to sequences \(\{w_{j}(t)\}_{j+1/2\in\mathbb{Z}}\).

The local existence of solutions of the scheme \eqref{eq:scheme} may then be formulated as follows: 
\begin{lemma}\label{thm-exsit2}
Let \(\theta, \underline{v}, \hat{v}\) and \(M_{0}\) be given positive numbers. Then there are constants \(C_1, C_2\), \(t_{0}\), and \(h_0\), depending only on $\theta, \underline{v},\hat v$ and $M_{0}$, such that: given \(h\leq h_{0}\) and initial data \(v_{j}(0), u_{k}(0)\) with values \(b+2\underline{v}\leq v_{j}( 0)\leq \hat v-\underline{v}\)
and with \(\mathscr{C} _{0}\leq M_{0}\), the scheme \eqref{eq:scheme} is solvable up to time \(t_0\) and satisfies
\begin{align}
\displaystyle  &b+\underline{v}\leq v_{j}\leq \hat v, \label{local-01} \\
&\mathscr{A}(t_0)+\mathscr{B}(t_0)\leq C_1\mathscr{C}_0, \label{local-02} %\\&\mathscr{A}(t_0)+\mathscr{D}(t_0)\leq C_2(\mathscr{C}_0+\mathscr{C}_1),\label{local-03}
\end{align}
and the regularity conditions
\begin{align}
&\langle v_{j}\rangle_{I_i\times[0,t_{0}]}^{\frac{1}{2},\frac{1}{2}-\frac{\theta}{4}}\leq C\mathscr{C}_{0},\quad i=1,2,3, \label{local-04}\\
%&\langle u_{k}\rangle_{\{0\leq t\leq t_{0}\}}^{\frac{1}{2},\frac{1}{4}}\leq C(\mathscr{C}_0+\mathscr{C}_1)^{1/2},\label{local-05}\\
&\langle u_k\rangle_{\{\tau\leq t\leq t_0\}}^{\frac{1}{2},\frac{1}{4}}\leq C\mathscr{C}_0\tau^{-\frac{1}{4}-\frac{\theta}{2}},\label{local-06}\\
&\left|\left[\ln v_{\pm N_* }(t)\right]-\exp\left(\epsilon^{-1}\int_{0}^{t}\alpha_{\pm}(s)ds\right)\left[\ln v_{\pm N_* }(0)\right]\right|\leq Ch^{1/2}\mathscr{C}_0^{1/2},\label{local-07}
\end{align}
where \(\alpha_{\pm}(s)\) is given by \eqref{eq:l3}.
\end{lemma}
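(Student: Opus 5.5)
The plan follows Hoff's semidiscrete scheme strategy \cite{hoff1991}. The system \eqref{eq:scheme} is a finite system of ODEs whose right-hand side is locally Lipschitz as long as $v_j>b$. Standard ODE theory therefore gives a local solution. The work is to extend it to a time $t_0$ that is independent of $h$, by a continuity (bootstrap) argument. Let $T^*$ be the maximal time up to which
\[
b+\underline{v}\le v_j\le \hat v\quad\text{and}\quad \mathscr{A}+\mathscr{B}\le 2C_1\mathscr{C}_0 .
\]
On $[0,T^*]$ we have \eqref{eq:vvv}, so $|p'|$, $|vp'|$ and $v^{-1}$ are uniformly bounded and no sign of $p'$ is used. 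Under these hypotheses we prove the closed versions of \eqref{local-01}--\eqref{local-02} for $t\le t_0$ with $t_0$ small. This gives $T^*\ge t_0$.

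\textbf{Step 1 (energy estimate for $\mathscr{A}$).} Multiply the $u$-equation by $(u_k-\bar u)h$ and the $v$-equation by $(p(v_j)-p(\bar v))h$, then sum. Summation by parts with periodicity gives
\[
\frac{d}{dt}\sum\Big(\tfrac12(u_k-\bar u)^2+G(v_j)\Big)h+\epsilon\sum_j\frac{(\delta u_j)^2}{v_j}h=0,
\]
with $G(v)=\int_{\bar v}^v (p(\bar v)-p(s))\,ds$. Since $G$ is not convex, we only bound $|G(v)|\le C|v-\bar v|^2$. For the $(v_j-\bar v)^2$ part we use $\frac{d}{dt}\sum (v_j-\bar v)^2h\le \eta\sum(\delta u)^2h+C\sum (v-\bar v)^2h$ and Gronwall on $[0,t_0]$; this is acceptable because $t_0$ is short.

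For $\delta v_k$ away from $k=\pm N_*$ we use the discrete analogue of Hoff's effective-flux identity. The $u$-equation reads $\dot u_k=\delta F_k$ with $F_j=\epsilon\delta u_j/v_j-p_j$. From $\dot v=\delta u$ we get $\epsilon\,\frac{d}{dt}\ln v_j=\epsilon\delta u_j/v_j=F_j+p_j$. Differencing in $k$ gives
\[
\epsilon\,\frac{d}{dt}\delta(\ln v)_k=\dot u_k+\delta p_k ,
\]
where $\delta p_k$ carries a damping-or-growth factor bounded by $C|\delta v_k|$. Multiplying by $\delta(\ln v)_k$, summing over $k'$, and moving the time derivative off $\dot u_k$ (by writing $\dot u_k\,\delta\ln v_k$ as a total derivative plus $u\,\delta\dot{\ln v}$ terms) yields $\sum_{k'}\delta v_k^2h\le C\mathscr{C}_0+C\int_0^t(\cdots)$. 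Gronwall then closes on $[0,t_0]$.

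\textbf{Step 2 (weighted estimates $\mathscr{B},\mathscr{F}$).} Multiply the $u$-equation by $t^{1/2}\dot u_k h$ to control $t^{1/2}\sum\delta u^2h$ and $\int t^{1/2}\sum\dot u^2h$. The initial singularity is handled by the weights together with the term $(\sum|\delta u_j(0)|h)^2$ in $\mathscr{C}_0$, exactly as in Hoff. The $t^{\theta}$ loss comes from an interpolation: $\|\delta u\|_{L^\infty}$ is estimated by $\|\delta u\|^{1/2}$ times $\|\delta(\delta u/v)\|_{k'}^{1/2}$, plus the jump contribution at $\pm N_*$. That jump contribution is bounded by $|[p_{\pm N_*}]|\le C\widetilde{\mathscr{C}}^{1/2}$, using $[F]=0$ there. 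Since $\widetilde{\mathscr{C}}$ is just a constant of size $O(1)$ and is not small, it only enters through $t_0$ and $C_1$. Differentiating the $u$-equation in $t$ and using the weight $t^{3/2+\theta}$ gives $\mathscr{F}$. Every nonlinear term is absorbed by taking $t_0$ small in terms of $M_0,\underline{v},\hat v$.

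\textbf{Step 3 (bounds on $v$, regularity, and the jump law).} The upper and lower bounds on $v_j$ follow from integrating $\epsilon\frac{d}{dt}\ln v_j=F_j+p_j$. Here $\int_0^t|F_j|\le Ct^{1/4-\theta/4}\mathscr{C}_0^{1/2}$, using the weighted sup bound on $F$ from Steps 1--2. Choosing $t_0$ small therefore keeps $v_j$ within $\underline{v}$ of its initial range, which gives \eqref{local-01}.

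Hölder continuity \eqref{local-04} in space within each $I_i$ comes from $\sum_{k'}\delta v^2h$ via Cauchy--Schwarz. In time it comes from $\dot v_j=\delta u_j$ together with the weighted $L^\infty$ bound on $\delta u$. The estimate \eqref{local-06} follows similarly from $\mathscr{B}$ and $\mathscr{F}$.

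For \eqref{local-07}, take the jump of the $\ln v$ equation across $\pm N_*$. This gives $\epsilon\frac{d}{dt}[\ln v]=[F]+[p]=[F]+\alpha_\pm[\ln v]$. The discrete flux jump satisfies $[F]_{\pm N_*}=h\,\dot u_{\pm N_*}$, so $\int_0^t|[F]|\le h\int t^{-(3/4+\theta/2)}\cdot C\mathscr{C}_0^{1/2}\le Ch^{1/2}\mathscr{C}_0^{1/2}$ once the weighted sup bounds on $\dot u$ and $\delta u$ are interpolated. Duhamel's formula then gives \eqref{local-07}.

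\textbf{Main obstacle.} The hardest step is Step 2 near the non-small jumps. The weighted estimates must be closed with sums over $k'$ only. Boundary terms at $\pm N_*$ therefore appear, and they involve $[p]$, which is $O(1)$. I would handle this as Hoff does, by always working with the flux $F$, which is continuous across the interface, rather than with $\delta u/v$. This keeps the $O(1)$ jump confined to lower-order terms that are multiplied by powers of $t$ and are controlled by shrinking $t_0$.
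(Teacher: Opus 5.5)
\textbf{Gap in Step 2.} Your Step 1 is fine: the non-convex energy $G$ plus Gronwall is an equivalent variant of the paper's direct $L^2$ estimate, and the $\delta\ln v$ estimate matches the paper's. Step 2, however, has a genuine gap at the one point where uniformity in $h$ for merely BV initial velocity matters. That point is the bound $\sup_t t^{1/2}\sum_j\delta u_j(t)^2h\le C\mathscr{C}_0$, and multiplying the $u$-equation by $t^{1/2}\dot u_k h$ cannot produce it:
\begin{itemize}
\item Differentiating the weight puts $\tfrac14\int_0^t s^{-1/2}\sum_j\epsilon\,\delta u_j^2v_j^{-1}h\,ds$ on the right-hand side. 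Neither $\int_0^t\sum_j\delta u_j^2h\,ds\le\mathscr{A}$ nor a bootstrap bound $\sum_j\delta u_j(s)^2h\le s^{-1/2}\mathscr{B}$ controls it; the latter leaves $\int_0^t s^{-1}ds$.
\item The weight kills the boundary term at $s=0$, so $(\sum_j|\delta u_j(0)|h)^2$ never enters your estimate at all.
\item The quantity you claim to control, $\int_0^t s^{1/2}\sum_k\dot u_k^2h\,ds$, is not bounded uniformly in $h$. For the heat equation with step initial velocity, $\|u_x(s)\|^2\sim s^{-1/2}$ and $\|u_t(s)\|^2\sim s^{-3/2}$, so it diverges logarithmically.
\end{itemize}
This is why the integral part of $\mathscr{B}$ carries the weight $t^{1/2+\theta}$. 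The condition $\theta>0$ is already forced by the term $J_2\le C\mathscr{B}\int_0^t s^{-1+\theta}ds$, not merely by the $L^\infty$ interpolation you point to.

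\textbf{Missing idea: Hoff's duality argument.} This is what the paper uses.
\begin{itemize}
\item Solve the backward discrete parabolic problem \eqref{back-para} with final datum $\psi_j(t)=\delta u_j(t)/\|\delta u(t)\|$. Summation by parts then gives $\|\delta u(t)\|=\sum_j\psi_j(0)\delta u_j(0)h+\int_0^t\sum_k\delta\psi_k\delta p_kh\,ds$.
\item Energy estimates for $\psi$ (the functional $\mathscr{E}$, bounded for small $t$) give $\|\psi(0)\|_\infty\le Ct^{-1/4}\mathscr{E}^{1/2}$.
\item Pairing this with $\sum_j|\delta u_j(0)|h$ is exactly what turns the BV norm of $u_0$ into $t^{-1/4}$ decay of $\|\delta u(t)\|$.
\item The $O(1)$ jumps $[p_{\pm N_*}]$ enter only through $\int_0^t\|\delta\psi\|_\infty ds\le C\mathscr{E}^{1/2}t^{1/4}$.
\end{itemize}
Only after this bound is available does the $t^{1/2+\theta}\dot u_k$ multiplier close. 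Your $\mathscr{F}$ estimate, the $L^\infty$ bound on $\delta u$, and hence the bound on $v_j$ in Step 3 all rest on it.

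\textbf{Two smaller points.}
\begin{itemize}
\item In Step 3, $\int_0^t|F_j|\,ds$ cannot be $O(t^{1/4-\theta/4}\mathscr{C}_0^{1/2})$, because $F_j$ contains $-p_j=O(1)$. What you need is a bound on $\int_0^t\|\delta u\|_\infty ds$, as in \eqref{eq:v-infty}.
\item For \eqref{local-07}, your bound $h|\dot u_{\pm N_*}|\le h^{1/2}t^{-3/4-\theta/2}\mathscr{F}^{1/2}$ is time-integrable only for $\theta<1/2$. The paper instead integrates the $h\dot u_{\pm N_*}$ term by parts in time and needs only $|u_{\pm N_*}-\bar u|\le h^{-1/2}\mathscr{A}^{1/2}$.
\end{itemize}
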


\begin{proof}
Observe that if \(\mathscr{C}_{0}<\infty\) and $v_{j}(0)\in [b+2\underline{v},\hat{v}-\underline{v}] $, then \(v_j(0)\) and \(u_k(0)\) are bounded uniformly in \(j\) and \(k\) (with bounds depending on \(h\) ). Thus the ordinary  differential equations \eqref{eq:scheme} together with these initial values constitute a well-posed initial value problem in \(L^{\infty}\cap L^{2}\) and so have a local solution defined up to some \(t_0>0\) which may depend on $h$. Our goal is to show that \(t_{0}\) is in fact independent of \(h\). We therefore assume that $v_{j}(t)\in [b+\underline{v},\hat{v}]$ for all $j$ and all $t\in [0,t_0]$ for some positive $t_0$, and proceed to obtain bounds for $\mathscr{A}, \mathscr{B}$ and $\mathscr{D}$. 
Throughout this proof $C$ denotes a generic positive constant as described above. 

We first give the  $L^2$ estimate of the local solution to \eqref{eq:scheme}. We multiply \eqref{eq:scheme}$_{1,2}$ by \((v_j-\bar v)h\) and \((u_k-\bar u)h\), respectively, and sum to obtain
\begin{align*}
\displaystyle  &\frac12\frac{d}{dt}\Big(\sum_{j} (v_j-\bar v)^{2}h+\sum_{k}(u_k-\bar u)^2h\Big)\\&=\sum_{j}\delta u_j(v_j-\bar v)h+\sum_{k}\Big(-\delta(p-\bar p)_k(u_k-\bar u)h+\delta\big(\frac{\epsilon\delta u}{v}\big)_k(u_k-\bar u)h\Big)\\
&=\sum_{j}\delta u_j(v_j-\bar v)h+\sum_{j}\Big((p_j-\bar p)\delta u_jh-\frac{\epsilon\delta u_j^2}{v_j}h\Big),
\end{align*}
where $\bar{p}=p({\bar v})$. Integrating the above equation and using the assumption $b+\underline{v}\leq v_j\leq \hat{v}$, we have
\begin{equation}\label{eq:b1}
\begin{aligned}
\displaystyle  &\Big(\frac12\sum_{j} (v_j-\bar v)(t)^{2}h+\frac12\sum_{k}(u_k-\bar u)(t)^2h\Big)\Big|^t_0+\int_0^t \sum_{j}\delta u_j(s)^2hds\\
&\leq C\int_0^t\sum_{j}\Big(\delta u_j(v_j-\bar v)h+(p_j-\bar p)\delta u_jh\Big)ds.\\
\end{aligned}  
\end{equation}
Noting that \(p_j-\bar p=p'(v_*)(v_j-\bar v)\) and \eqref{eq:vvv}, we can bound the right side of \eqref{eq:b1} by
\[\begin{aligned}
&C\int_0^t\sum_{j}\Big(\delta u_j(v_j-\bar v)h+(p_j-\bar p)\delta u_jh\Big)ds\\& \leq C\int_{0}^{t}\sum_{j}|v_{j}-\bar v||\delta u_j|hds \\&\leq C\int_{0}^{t}\sum_{j}(v_{j}-\bar v)(s)^2hds+\frac12\int_{0}^{t}\sum_{j}\delta u_j(s)^2hds\\&\leq C\mathscr{A}t+\frac12\int_{0}^{t}\sum_{j}\delta u_j(s)^2hds.
\end{aligned}\]
Substituting the above formula into \eqref{eq:b1} yields
\begin{equation}\label{eq:b0}
  \sum_{j}(v_{j}-\bar v)(t)^{2}h+\sum_{k}(u_{k}-\bar u)(t)^{2}h+\int_{0}^{t}\sum\limits_{j}\delta u_{j}(s)^{2}hds\leq C({\mathscr{C}}_{0}+{\mathscr{A}}t).
\end{equation} 

Next, we derive an upper bound for \(\sum\limits_{k^{\prime}}\delta v_{k}^{2}h\).
Letting \(L_j=L(v_j)=\ln v_j\), we find from \eqref{eq:scheme}$_1$ that
\begin{equation}\label{eq:vx1}
  \dot{L}_{j}=\frac{\dot v_{j}}{v_{j}}=\frac{\delta u_{j}}{v_{j}},
\end{equation}
thus we have from \eqref{eq:scheme}$_2$ and \eqref{eq:vx1} that
\begin{equation}\label{eq:vx2}
  \epsilon\delta\dot{L}_{k}=\delta\Big(\frac{\epsilon\delta u}{v}\Big)_{k}=\delta p_k+\dot{u}_k.
\end{equation}
It is easy to obtain from the assumption $b+\underline{v}\leq v_j\leq \hat{v}$ and \eqref{eq:vvv} that
\begin{equation}\label{eq:lpv}
	\delta L_k\sim\delta p_{k}\sim\delta v_{k}.
\end{equation}
Multiplying \eqref{eq:vx2} by \(\delta L_{k}\), summing and integrating, we obtain
\begin{equation*}
\begin{aligned}\frac\epsilon2\sum_{k^{\prime}}\delta L_{k}^{2}h\Big|_{0}^{t}&=\int_{0}^{t}\sum_{k^{'}}\delta L_{k}\delta p_{k}hds+\sum_{k^{'}}(u_k-\bar u)\delta L_{k}h\Big|_{0}^{t}%\\&
-\frac1\epsilon\int_{0}^{t}\sum_{k^{'}} (u_k-\bar u)(\dot{u}_{k}+\delta p_{k})hds\\&
=-\frac1{2\epsilon}\sum_{k^{\prime}} (u_k-\bar u)^{2}h\Big|_{0}^{t}+\int_{0}^{t}\sum_{k^{'}}\delta L_{k}\delta p_{k}hds+\sum_{k^{'}}(u_k-\bar u)\delta L_{k}h\Big|_{0}^{t}\\&\quad
-\frac1\epsilon\int_{0}^{t}\sum_{k^{'}} (u_k-\bar u)\delta p_{k}hds\\
&\leq C\mathscr{C}_{0}+C\sum_{k^{\prime}} (u_k-\bar u)^{2}h+\frac\epsilon4\sum_{k^{\prime}}\!\delta L_{k}^{2}h+\int_{0}^{t}\!\sum_{k^{'}}\!\delta v_{k}^{2}hds+\int_{0}^{t}\!\sum_{k^{'}}(u_{k}\!-\!\bar u)^{2}hds\\
&\leq C\mathscr{C}_{0}+C\sum_{k^{\prime}} (u_k-\bar u)^{2}h+\frac\epsilon4\sum_{k^{\prime}}\!\delta L_{k}^{2}h+\mathscr{A}t,\end{aligned}  
\end{equation*}
which follows from the definition of $\mathscr{A}$, \eqref{eq:vx2} and \eqref{eq:lpv}. Elementary estimates based upon \eqref{eq:b0} then show that
\begin{equation}\label{eq:vx0}
\begin{aligned}\sum_{k'}\delta v_{k}(t)^{2}h\leq C(\mathscr{C}_{0}+\mathscr{A}t).\end{aligned}  
\end{equation}
We obtain from \eqref{eq:b0} and \eqref{eq:vx0} that
 \begin{equation}\label{eq:est-a0}
  \mathscr{A}(t)\leq C(\mathscr{C}_0+\mathscr{A}t).
\end{equation}
Moreover, we find from \eqref{eq:vx2} that at \(x_{\pm N_* }\),
\begin{equation}\label{eq:l1}
  \epsilon[\dot{L}_{\pm N_* }]=\Big[\big(\frac{\epsilon\delta u}{v}\big)_{\pm N_* }\Big]=[p_{\pm N_* }]+h\dot{u}_{\pm N_* },
\end{equation}
where \([w_{\pm N_* }]\triangleq w_{\pm N_* +\frac12}-w_{\pm N_* -\frac12}\). We obtain
\begin{equation}\label{eq:l2}
 [p_{\pm N_* }]=\alpha_\pm[L_{\pm N_* }],
\end{equation}
in which 
\begin{equation}\label{eq:l3}
 \alpha_{\pm}=\frac{p(v_{\pm N_* +\frac12})-p(v_{\pm N_* -\frac12})}{L(v_{\pm N_* +\frac12})-L(v_{\pm N_* -\frac12})}=\xi_\pm p'(\xi_\pm),
\end{equation}
with $\xi_\pm$ are between $v_{\pm N_* -\frac12}$ and $v_{\pm N_* +\frac12}$. 
The bounds in \eqref{eq:vvv} show that $-C^{-1}\leq\alpha_\pm\leq C$. Equation \eqref{eq:l1} thus becomes
\begin{equation}\label{eq:l4}
  \epsilon[\dot{L}_{\pm N_* }]=\alpha_\pm[L_{\pm N_* }]+h\dot{u}_{\pm N_* },
\end{equation}
whose solution is
\[\begin{aligned}
[L_{\pm N_* }(t)]& =\exp\Big(\epsilon^{-1}\int_{0}^{t}\alpha_{\pm}(s)ds\Big)\Big([L_{\pm N_* }(0)]-\frac{h}{\epsilon}(u_{\pm N_* }(0)-\bar u)\Big)+\frac{h}{\epsilon}(u_{\pm N_* }(t) -\bar u) \\
&+\frac{h}{\epsilon^{2}}\int_{0}^{t}\alpha_{\pm}(s)\exp\Big(\epsilon^{-1}\int_{s}^{t}\alpha_{\pm}(\tau)d\tau\Big)(u_{\pm N_* }(s) -\bar u)ds.
\end{aligned}\]
The bound \(|u_{\pm N_* }-\bar u|\leq h^{-1/2}\mathscr{A}^{1/2}\) and the assumption $0\leq t\leq t_0\leq 1$ show that
\begin{equation}\label{eq:lnv-jump0}\Big|[L_{\pm N_* }(t)]-\exp\Big(e^{-1}\int_{0}^{t}\alpha_{i}(s)ds\Big)[L_{\pm N_* }(0)]\Big|\leq Ch^{\frac12}\mathscr{A}^{\frac12},\end{equation}
so that by the above bound for \(\alpha_\pm\), it holds
\begin{equation}\label{eq:v-jump0}\left|[v_{\pm N_* }(t)]\right|\leq C(\left|[v_{\pm N_* }(0)]\right|+h^{\frac12}\mathscr{A}^{\frac12}).\end{equation}
We also have the estimate
\begin{equation}\label{eq220}
|[p_{\pm N_* }(t)]|\leq C(\tilde{\mathscr{C}}^{1/2}+h^{\frac12}\mathscr{A}^{\frac12}),
\end{equation}
which follows from \eqref{eq:l2} and \eqref{eq:v-jump0}.

We now proceed to derive a bound for $\mathscr{B}$. 
We multiply the second equation in \eqref{eq:scheme} by \(\delta\psi_{k}(s)h\)  where \(\{\psi_{j}(s)\}_{0\leq s\leq t}\) is a test sequence to be specified below, sum the product over \(j\) and integrate theresult to obtain
\[-\sum_{j}\psi_{j}\delta u_{j}h\Big|_{0}^t=-\int_{0}^{t}\sum_{j}\delta u_{j}\left(\dot{\psi}_{j}+\frac{\epsilon\delta(\delta\psi)_{j}}{v_{j}}\right)hds-\int_{0}^{t}\sum_{k}\delta\psi_{k}\delta p_{k}hds.\]
We now choose \(\psi_{j}\) to satisfy the initial value problem
\begin{equation}\label{back-para}
  \left\{\begin{aligned}
  \displaystyle  &\dot{\psi}_{j}+\frac{\epsilon\delta(\delta\psi)_{j}}{v_{j}}=0,\quad0\leq s\leq t,\\&\psi_{j}(t)=\delta u_{j}(t)\Big/\Big(\sum_{j}\delta u_{j}(t)^{2}h\Big)^{1/2}.
  \end{aligned}  \right.
\end{equation}
We then have that
\begin{equation}\label{eq228}
 \begin{aligned}
\Big(\sum_{j}\delta u_{j}(t)^{2}h\Big)^{1/2}\leq & \Big|\sum_{j}\psi_{j}(0)\delta u_{j}(0)h\Big|+\int_{0}^{t}\sum_{k^{\prime}}|\delta\varphi_{k}\delta p_{k}|hds \\&+\int_{0}^{t}\|\delta\psi(s)\|_{\infty}(|[p_{N_* }(s)]|+|[p_{- N_* }(s)]|)ds\\\triangleq &\sum_{i=1}^3I_i.
\end{aligned} 
\end{equation}
Defining the quantity $\mathscr{E}$ by
\[\begin{aligned}\label{eq229}
\mathscr{E}=& \sup_{0\leq s\leq t}\Big(\sum_{j}\psi_{j}(s)^{2}h+(t-s)\sum_{k}\delta\psi_{k}(s)^{2}h\Big)  \\
&+\int_{0}^{t}\Big(\sum\limits_{k}\delta\psi_{k}(s)^{2}h+(t-s)\sum_{j}(\dot{\psi}_{j}(s)^{2}+\delta(\delta\psi)_{j}(s)^{2})h\Big)ds.
\end{aligned}\]
We proceed to estimate the right-hand side of \eqref{eq228} in terms of $\mathscr{E}$. First,
\[\|\psi(0)\|_{\infty}\leq \Big(\sum_{j}\psi_{j}(0)^{2}h\Big)^{1/4}\Big(\sum_{k}\delta\psi_{k}(0)^{2}h\Big)^{1/4}\leq Ct^{-1/4}\mathscr{E}^{1/2},\]
so that
\[\begin{aligned}
I_1=\Big|\sum\psi_{j}(0)\delta u_{j}(0)h \Big|&\leq Ct^{-1/4}\mathscr{E}^{1/2}\sum_{j}\mid u_{j+\frac{1}{2}}(0)-u_{j-\frac{1}{2}}(0)\mid \\
&\leq Ct^{-1/4}\mathscr{E}^{1/2}\mathscr{C}_{0}^{1/2}. 
\end{aligned}\]
Next, from \eqref{eq:lpv} and \eqref{eq229},
\[\begin{aligned}
I_2=\int_{0}^{t}\sum_{k^{\prime}}|\delta\varphi_{k}|~|\delta p_{k}| hds &\leq C\Big(\int_{0}^{t}\sum\limits_{k^{\prime}}\delta v_{k}^{2}hds\Big)^{1/2}\mathscr{E}^{1/2} \leq C\mathscr{A}^{1/2}\mathscr{E}^{1/2}.
\end{aligned}\]
Finally, using \eqref{eq229} and \eqref{eq220}, we have
\[\begin{aligned}
I_3&\leq\int_{0}^{t}\Big(\sum_{k}\delta\psi_{k}^{2}h\Big)^{1/4}\Big(\sum_{j}\delta(\delta\psi)_{j}^{2}h\Big)^{1/4}(\tilde{\mathscr{C}}^{1/2}+h^{1/2}{\mathscr{A}}^{1/2})ds \\
&\leq C\mathscr{E}^{\frac14}\Big(\int_{0}^{t}(t-s)^{-\frac23}ds\Big)^{\frac34}\Big(\int_{0}^{t}\sum_{j}(t-s)\delta(\delta\psi)_{j}^{2}hds\Big)^{\frac14}(\tilde{\mathscr{C}}^{\frac12}+h^{\frac12}{\mathscr{A}}^{\frac12})\\
&\leq C\mathscr{E}^{1/2}t^{\frac14}(\tilde{\mathscr{C}}^{\frac12}+h^{\frac12}{\mathscr{A}}^{\frac12}).
\end{aligned}\]
Substituting the above estimates into \eqref{eq228} yields
\begin{equation}\label{eq230}
\begin{aligned}
  \sup\limits_{0\leq t\leq t_{0}}t^{1/2}\sum\limits_{j}\delta u_{j}(t)^{2}h&\leq C{\mathscr{E}}(\mathscr{C}_{0}+\mathscr{A}t^{1/2}+\tilde{\mathscr{C}}t+ h\mathscr{A}t)\\
  &\leq C{\mathscr{E}}(\mathscr{C}_{0}+\mathscr{A}t_0^{1/2}+ h\mathscr{A}),
\end{aligned}
\end{equation}
by the assumption $\tilde{\mathscr{C}}t\leq \mathscr{C}_{0}$ for small time $t\leq t_0$.

We now derive  the estimate of $\mathscr{E}$ in terms of $\mathscr{A}$.
First, we multiply \eqref{back-para} by \(v_j\psi_jh\), sum and integrate to obtain
$$\sum_{j}\psi_{j}(s)^{2}h+\int_{s}^{t}\sum\limits_{k}\delta\psi_{k}(\tau)^{2}hd\tau\leq C\Big(1+\int_{s}^{t}\sum_{j}\psi_{j}^{2}|\delta u_{j}|hd\tau\Big).$$
The last term can be bounded by
\begin{equation*}
\begin{aligned}
\int_{s}^{t}\sum_{j}\psi_{j}^{2}|\delta u_{j}|hd\tau
&\leq\int_{s}^{t}\Big(\sum_{j}\psi_{j}(\tau)^{2}h\Big)^{1/2}\Big(\sum_{j}\delta u_{j}(\tau)^{2}h\Big)^{1/2}\|\psi_j\|_\infty d\tau\\
&\leq C{\mathscr{E}}\int_{s}^{t}\Big(\sum_{j}\delta u_{j}(\tau)^{2}h\Big)^{1/2}(t-\tau)^{-\frac14} d\tau\\
&\leq C{\mathscr{E}}{\mathscr{A}}^{1/2}t^{1/4},
\end{aligned}
\end{equation*}
thus
\begin{equation}\label{eq233}
\begin{aligned}
\sum_{j}\psi_{j}(s)^{2}h+\int_{s}^{t}\sum\limits_{k}\delta\psi_{k}(\tau)^{2}hd\tau
&\leq C(1+{\mathscr{E}}{\mathscr{A}}^{1/2}t^{1/4}).
\end{aligned}
\end{equation}
Multiplying \eqref{back-para} by \(\left(t-s\right)v_{j}\dot{\psi}_{j}h\), summing
and integrating, we obtain in a similar way that
\begin{equation}\label{eq234}
(t-s)\sum\limits_{k}\delta\psi_{k}(s)^{2}h+\int_{s}^{t}\sum\limits_{j}(t-\tau)\dot{\psi}_{j}(\tau)^{2}hd\tau\leq C\int_{s}^{t}\sum\limits_{k}\delta\psi_{k}(\tau)^{2}hd\tau.\end{equation}
Combining appropriate multiples of \eqref{eq233}, \eqref{eq234} and \eqref{back-para}, and taking the
supremum over \(s\in[0,t]\), we obtain
\begin{equation}\label{eq231}
\mathscr{E}\leq C(1+\mathscr{E}\mathscr{A}^{1/2}t^{1/4}).  
\end{equation}
It holds that \(\mathscr{A}t^{1/2}\leq1/(4C^{2})\) for small time $t\leq t_0$, which shows \(\mathscr{E}\leq C\). Substituting this into \eqref{eq230}, we obtain 
\begin{equation}\label{eq:est-ux}
\sup_{0\leq t\leq t_{0}}t^{1/2}\sum_{j}\delta u_{j}(t)^{2}h\leq C(\mathscr{C}_{0}+\mathscr{A}t_0^{1/2}+ h\mathscr{A}).\end{equation}

Next, we multiply the second equation in \eqref{eq:scheme} by \(t^{\beta}\dot{u}_{k}(t)h\), sum and integrate to get
\begin{equation}\label{eq:t-beta}\begin{aligned}
		&\frac{s^{\beta}}{2}\sum_{j}\frac{\delta u_{j}(s)^{2}h}{v_j}\Big|_0^t+\int_{0}^{t}\sum_{k}s^{\beta}\dot{u}_{k}(s)^{2}hds \\&= s^{\beta}\sum_{j}(p_j-\bar p)\delta u_j h\Big|_0^t+\int_0^t\frac{\beta s^{\beta-1}}{2}\sum_{j}\frac{\delta u_{j}(s)^{2}h}{v_j}ds%&\\
		-\int_0^t\frac{s^{\beta}}{2}\sum_{j}\frac{\delta u_{j}(s)^{3}h}{v_j^2}ds \\&
		\quad-\int_{0}^{t}s^{\beta}\sum_{j}\dot{p_j}\delta u_j hds-\int_{0}^{t}\beta s^{\beta-1}\sum_{j}(p_j-\bar p)\delta u_j hds
\end{aligned}\end{equation}
Let $\beta=\frac{1}{2}+\theta$ with \(0<\theta<1\), and after some elementary manipulations, we obtain
\begin{equation}\label{eq235}\begin{aligned}
&t^{\frac{1}{2}+\theta}\sum_{j}\delta u_{j}(t)^{2}h+\int_{0}^{t}\sum_{k}s^{\frac{1}{2}+\theta}\dot{u}_{k}(s)^{2}hds \\
&\leq t^{\frac{1}{2}+\theta}\sum_{j}|p_j-\bar p|\,|\delta u_j| h+C\int_0^ts^{-\frac{1}{2}+\theta}\sum_{j}\delta u_{j}(s)^{2}hds\\
&\quad		+C\int_0^ts^{\frac{1}{2}+\theta}\sum_{j}|\delta u_{j}(s)|^{3}hds +\int_{0}^{t}s^{\frac{1}{2}+\theta}\sum_{j}|\dot{p_j}|\,|\delta u_j| hds\\&\quad+C\int_{0}^{t}s^{-\frac{1}{2}+\theta}\sum_{j}|p_j-\bar p|\,|\delta u_j| h \\&\triangleq \sum_{i=1}^{5}J_i.
\end{aligned}\end{equation}
We apply straightforward estimates to each term on the right-hand side of \eqref{eq235}. First, By \eqref{eq:vvv},\eqref{eq:lpv} and the definition of $\mathscr{A},\mathscr{B}$, we obtain
\begin{align}
	&J_1\leq C t^{\frac{1}{2}+\theta}\sum_{j}|v_j-\bar v|\,|\delta u_j| h\leq C\mathscr{A}^{1/2}\mathscr{B}^{1/2}t^{\frac{1}{4}+\theta},\label{eq:ux-j1}\\
	&J_2\leq C\mathscr{B}\int_0^ts^{-1+\theta}ds\leq C\mathscr{B}t^\theta,\\
	&J_4\leq C\int_{0}^{t}s^{\frac{1}{2}+\theta}\sum_{j}|\dot{v_j}|\,|\delta u_j| hds\leq C\mathscr{B}\int_0^ts^{\theta}ds\leq C\mathscr{B}t^{1+\theta},\\
	&J_5\leq C\int_0^ts^{-\frac{1}{2}+\theta}\sum_{j}|v_j-\bar v|\,|\delta u_j| hds%\leq C\mathscr{A}^{1/2}\mathscr{B}^{1/2}\int_0^ts^{-\frac{3}{4}+\theta}ds
	\leq C\mathscr{A}^{1/2}\mathscr{B}^{1/2}t^{\frac{1}{4}+\theta}.\label{eq:ux-j5}
\end{align}
Similarly, we have
\begin{equation}\label{eq:ux-j3}
\begin{aligned}
J_3&\leq s^{\frac{1}{2}+\theta}\int\sum |\delta u_{j}|^{3}hds  \\
&\leq C\int_{0}^{t}s^{\theta}\sum_{j}\delta u_{j}^{2}hds+C\int_{0}^{t}s^{1+\theta}\sum_{j}\delta u_{j}^{4}hds \\
&\leq C\mathscr{B}t^{\frac{1}{2}+\theta}+C\mathscr{B}\int_{0}^{t}s^{\frac12+\theta}\|\delta u\|_{\infty}^2(s)ds.
\end{aligned}	
\end{equation}
Hence we need to obtain the bound for \(\|\delta u\|_{\infty}\). A discrete Sobolev-type inequality shows that
\[\begin{aligned}
\left({\frac{\delta u}{v}}\right)_J^{2}-\left({\frac{\delta u}{v}}\right)_j^{2}& =\sum_{j<k<J}\Big(\big(\frac{\delta u}{v}\big)_{k+\frac{1}{2}}^{2}-\big(\frac{\delta u}{v}\big)_{k-\frac{1}{2}}^{2}\Big) \\
&\leq C\Big(\sum_{j}\delta u_{j}^{2}h\Big)^{1/2}\Big(\sum_{k^{\prime}}\delta\big(\frac{\delta u}{v}\big)_{k}^{2}h\Big)^{1/2} \\
&\quad+C\Big(\eta\left\|\delta u\right\|_{\infty}^{2}+\eta^{-1}\big(\big|\big[\frac{\delta u}{v}\big]_{\pm N_* }\big|\big)^{2}\Big).
\end{aligned}\]
Summing the above inequality over $j$ from $-N+1/2$ to $N-1/2$, we obtain
\[\begin{aligned}
\left({\frac{\delta u}{v}}\right)_J^{2}&\leq C\sum_{j}\delta u_{j}^{2}h+
 C\Big(\sum_{j}\delta u_{j}^{2}h\Big)^{1/2}\Big(\sum_{k^{\prime}}\delta\big(\frac{\delta u}{v}\big)_{k}^{2}h\Big)^{1/2} \\
&\quad+C\Big(\eta\left\|\delta u\right\|_{\infty}^{2}+\eta^{-1}\big(\big|\big[\frac{\delta u}{v}\big]_{N_* }\big|+\big|\big[\frac{\delta u}{v}\big]_{-N_* }\big|\big)^{2}\Big).
\end{aligned}\]
Taking $\eta$ small and applying \eqref{eq:l1} and \eqref{eq220}, we thus obtain
\begin{equation}\label{eq:ux-infty}
\begin{aligned}
\|\delta u(t)\|_{\infty}^{2}\leq Ct^{-\frac12}\mathscr{B}+Ct^{-\frac14}\mathscr{B}^{1/2}\Big(\sum_{k^{\prime}}\delta\big(\frac{\delta u}{v}\big)_{k}^{2}h\Big)^{1/2}+C\big(\tilde{\mathscr{C}}+ h\mathscr{A}+h^{2}(\dot{u}_{N_* }^{2}+\dot{u}_{-N_* }^{2})\big).
\end{aligned}	
\end{equation}
% Also, it holds
% \begin{equation}\label{eq:ux-infty1}
% \begin{aligned}
% \|\delta u(t)\|_{\infty}^{2}\leq C\mathscr{D}+C\mathscr{D}^{1/2}\Big(\sum_{k^{\prime}}\delta\big(\frac{\delta u}{v}\big)_{k}^{2}h\Big)^{1/2}+C\big(\tilde{\mathscr{C}}+ h\mathscr{A}+h^{2}(\dot{u}_{N_* }^{2}+\dot{u}_{-N_* }^{2})\big).
% \end{aligned}	
% \end{equation}
From above estimates, we have
\begin{equation}\label{eq:ux-j31}
\begin{aligned}
&\int_{0}^{t}s^{\frac12+\theta}\|\delta u\|_{\infty}^2(s)ds\\
&\leq C\int_{0}^{t}s^{\theta}\mathscr{B}ds+C\int_{0}^{t}s^{\frac14+\theta}\mathscr{B}^{1/2}\Big(\sum_{k^{\prime}}\delta\big(\frac{\delta u}{v}\big)_{k}^{2}h\Big)^{1/2}ds\\
&\quad+C\int_{0}^{t}s^{\frac12+\theta}\big(\tilde{\mathscr{C}}+ h\mathscr{A}\big)ds+C\int_{0}^{t}s^{\frac12+\theta}h^{2}(\dot{u}_{N_* }^{2}+\dot{u}_{-N_* }^{2})ds\\
&\leq C\mathscr{B}t^{1+\theta}+C\mathscr{B}t^{\frac12+\frac\theta 2}+C\big(\tilde{\mathscr{C}}+ h\mathscr{A}\big)t^{\frac32+\theta}+C\mathscr{B}h.
\end{aligned}	
\end{equation}
Substituting \eqref{eq:ux-j31} into \eqref{eq:ux-j3} yields
\begin{equation}\label{eq:ux-j32}
\begin{aligned}
J_3&\leq  C\mathscr{B}t^{\frac{1}{2}+\theta}+C\mathscr{B}\Big(\mathscr{B}t^{1+\theta}+\mathscr{B}t^{\frac12+\frac\theta 2}+\big(\tilde{\mathscr{C}}+ h\mathscr{A}\big)t^{\frac32+\theta}+\mathscr{B}h\Big)\\
&\leq  C(\mathscr{B}+\mathscr{B}^2)t^{\theta}+C h(\mathscr{A}^2+\mathscr{B}^2).
\end{aligned}	
\end{equation}
By combining \eqref{eq:ux-j1}-\eqref{eq:ux-j5}, \eqref{eq:ux-j32} and \eqref{eq235}, we obtain
\begin{equation}\label{eq236}\begin{aligned}
&t^{\frac{1}{2}+\theta}\sum_{j}\delta u_{j}(t)^{2}h+\int_{0}^{t}\sum_{k}s^{\frac{1}{2}+\theta}\dot{u}_{k}(s)^{2}hds \\
&\leq C\mathscr{A}^{1/2}\mathscr{B}^{1/2}t^{\frac{1}{4}+\theta}+C\mathscr{B}t^{1+\theta}+C(\mathscr{B}+\mathscr{B}^2)t^{\theta}+C h(\mathscr{A}^2+\mathscr{B}^2)\\
&\leq C(\mathscr{A}+\mathscr{B}+\mathscr{B}^2)t^{\theta}+C h(\mathscr{A}^2+\mathscr{B}^2),
\end{aligned}\end{equation}
which yields
\begin{equation}\label{eq:est-uxadd}\begin{aligned}
&\sup_{0\leq t\leq t_{0}}t^{\frac{1}{2}+\theta}\sum_{j}\delta u_{j}(t)^{2}h+\int_{0}^{t_0}\sum_{k}t^{\frac{1}{2}+\theta}\dot{u}_{k}(t)^{2}hdt\\
&\quad\leq C(\mathscr{A}+\mathscr{B}+\mathscr{B}^2)t_0^{\theta}+C h(\mathscr{A}^2+\mathscr{B}^2).
\end{aligned}\end{equation}
By virtue of the relations \eqref{eq:scheme},  \eqref{eq:vx0} and \eqref{eq:est-uxadd}, we obtain
\begin{equation*}
	\begin{aligned}
\int_{0}^{t_{0}}t^{\frac12+\theta}\sum\limits_{k^{\prime}}\delta\left(\frac{\delta u}{v}\right)_{k}(t)^{2}hdt&\leq \int_{0}^{t_{0}}t^{\frac12+\theta}\sum_{k^{\prime}}\dot{u}_{k}(t)^{2}hdt+\int_{0}^{t_{0}}t^{\frac12+\theta}\sum_{k^{\prime}}\delta p_{k}(t)^{2}hdt \\
&\leq C\mathscr{C}_0+C(\mathscr{A}+\mathscr{B}+\mathscr{B}^2)t_0^{\theta}+C h(\mathscr{A}^2+\mathscr{B}^2),
\end{aligned}
\end{equation*}
which together with \eqref{eq:est-uxadd} yields 
\begin{equation}\label{prop-ab}\mathscr{B}(t)\leq C\big(\mathscr{C}_0+(\mathscr{A}+\mathscr{B}+\mathscr{B}^2)t^{\frac{\theta}{2}}+ h(\mathscr{A}+\mathscr{A}^2+\mathscr{B}^2)\big).\end{equation}

Let \(C=C(\theta,\underline{v},\hat v,M_0)\) be the constant in \eqref{eq:est-a0}  and \eqref{prop-ab}. For small time $t_0$, we also choose \(h_{0}\) sufficiently small such that \(h\leq h_{0}\) and \(t\leq t_0\), there exists a constant $C_1=C_1(\theta,\underline{v},\hat v,M_0)$ satisfying
\[\mathscr{A}+\mathscr{B}\leq C_1\mathscr{C}_0.\]
Furthermore, by \eqref{eq:ux-infty}, it holds
\begin{equation}\label{eq:v-infty}
	\begin{aligned}
\left| {{v}_{j}\left( {t}\right)  - {v}_{j}\left( 0\right) }\right|  &\leq  \int_{0}^{{t}}\parallel {\delta u}\left( s\right) {\parallel }_{\infty }{ds}\\
&\leq  C\int_{0}^{t}  {s}^{-\frac{1}{4}}{\mathscr{B}}^{\frac{1}{2}}ds+C\int_{0}^{t}  {{s}^{-\frac{1}{4} - \frac{\theta }{4}}{\mathscr{B}}^{\frac{1}{4}}{\left( {s}^{\frac{1}{2} + \theta }\mathop{\sum }\limits_{k^{\prime}}\delta {\left( \frac{\delta u}{v}\right) }_{k}^{2}h\right) }^{\frac{1}{4}}}ds\\
&\quad +C\int_{0}^{t}\big(\tilde{\mathscr{C}}+ h\mathscr{A}+h^{2}\dot{u}_{\pm N_* }(t)^{2}\big)^{\frac12}ds\\
&\leq  C\Big({\mathscr{B}}^{\frac{1}{2}}{t}^{\frac{3}{4}}+  {\mathscr{B}}^{\frac{1}{2}}{t}^{\frac{1}{2} - \frac{\theta }{4}} + (\tilde{\mathscr{C}}+ h\mathscr{A})^{\frac{1}{2}}t+ {\mathscr{B}}^{\frac{1}{2}}{t}^{\frac{3}{4} - \frac{\theta }{2}} \Big)\\
&\leq  C{\mathscr{C}}_{0}^{\frac{1}{2}}t^{\frac{1}{2} - \frac{\theta }{4}}\leq \underline{v},
	\end{aligned}	
\end{equation}
provided that \(t\leq t_0\) and \(h\leq h_{0}=O(t_0)\). Thus
\begin{equation}\label{eq:v-infty1}
	b+\underline{v}\leq v_j(0)-\underline{v}\leq v_j(t)\leq v_j(0)+\underline{v}\leq \hat{v}.
\end{equation}
The proof of \eqref{local-01}-\eqref{local-02} then proceeds as above.

Next, we derive the bounds \eqref{local-04}-\eqref{local-07} as consequences of the energy estimates \eqref{local-02}. This will show that \eqref{local-04}-\eqref{local-07} hold, provided that \(h\leq h_{0}\) and \(t\leq t_0\), and for as long as the solution has values \(v_{j} \in [b+\underline{v}, \hat{v}]\). 
First, the validity of the assertions \eqref{local-04}-\eqref{local-06} of H\"{o}lder continuity in the \(x\)-variable is evident from the energy estimates \eqref{local-02}, and the definitions \(\mathscr{A},\mathscr{B}\). To prove the regularity of \(\left\{{v}_{j}\right\}\) in \(t\), we compute from \eqref{eq:scheme} and \eqref{eq:ux-infty} that
\begin{equation}\label{eq256}
	\begin{aligned}
\left| {{v}_{j}\left( {t}^{\prime }\right)  - {v}_{j}(t) }\right|  &\leq  \int_{t}^{{t}^{\prime }}\parallel {\delta u}\left( s\right) {\parallel }_{\infty }{ds}
% \\
% &\leq C\int_{t}^{{t}^{\prime}}  {s}^{-\frac{1}{4}}{\mathscr{B}}^{\frac{1}{2}}ds+ C\int_{t}^{{t}^{\prime}}  {{s}^{-\frac{1}{4} - \frac{\theta }{4}}{\mathscr{B}}^{\frac{1}{4}}{\left( {s}^{\frac{1}{2} + \theta }\mathop{\sum }\limits_{k^{\prime}}\delta {\left( \frac{\delta u}{v}\right) }_{k}^{2}h\right) }^{\frac{1}{4}}}ds\\
% &\quad +C\int_{t}^{{t}^{\prime}}\big(\tilde{\mathscr{C}}+ h\mathscr{A}+h^{2}\dot{u}_{\pm N_* }(t)^{2}\big)^{\frac12}ds\\
% &\leq  C\Big( {\mathscr{B}}^{\frac{1}{2}}\big| {\left( {t}^{\prime }\right) }^{\frac{3}{4}} - {t}^{\frac{3}{4}}\big|+ {\mathscr{B}}^{\frac{1}{2}}{\big| {\left( {t}^{\prime }\right) }^{\frac{2 - \theta }{3}} - {t}^{\frac{2 - \theta }{3}}\big| }^{\frac{3}{4}} + (\tilde{\mathscr{C}}+ h\mathscr{A})^{\frac{1}{2}}\big| {{t}^{\prime } - t}\big|\\
% &\quad\quad+ (h\mathscr{D})^{\frac{1}{2}}\big| {({t}^{\prime}) }^{\frac{1}{2}} - {t}^{\frac{1}{2}}\big| \Big)\\&
\leq  C \mathscr{C}_{0}^{\frac{1}{2}}{\big| {t}^{\prime } - t\big| }^{\frac{1}{2} - \frac{\theta }{4}}.
	\end{aligned}	
\end{equation}
We have taken \(h\) and \(t\) small here, depending on \({M}_{0}\).
To prove the H\"{o}lder continuity \eqref{local-06} for $u$, we first observe that, for \(J \in  \mathbb{Z}\) and \(s \geq  \tau\),
\[
\left| {u_k(s)  - \frac{1}{Jh}\mathop{\sum }\limits_{{l = k}}^{{k + J - 1}}{u}_{k}\left( s\right) h}\right|  \leq  \mathop{\sum }\limits_{{j = k + 1/2}}^{{k + J - 3/2}}\left| {\delta {u}_{j}}\right| h \leq  \tau^{\frac14}\mathscr{B}^{1/2}(Jh)^{1/2}.
\]
Thus
\[\begin{aligned}
\displaystyle 
\left| {{u}_{k}\left( {t}^{\prime }\right)  - {u}_{k}(t) }\right| & \leq  \frac{1}{Jh}\int_{t}^{{t}^{\prime }}\mathop{\sum }\limits_{k}^{{k + J - 1}}\left| {{\dot{u}}_{l}\left( s\right) }\right| {hds} + 2\tau^{-\frac14}\mathscr{B}^{1/2}(Jh)^{1/2}\\
&\leq C\tau^{-\frac14-\frac{\theta}{2}}\mathscr{B}^{1/2}(Jh)^{-1/2}|t'-t|^{\frac12}+2\tau^{-\frac14}\mathscr{B}^{1/2}(Jh)^{1/2}\\
& \leq  C\tau^{-\frac14-\frac{\theta}{2}}\mathscr{C}_0^{1/2}{\left| {t}^{\prime } - t\right| }^{\frac14},
\end{aligned}\]
if we take \({Jh} = O( {| {t}^{\prime } - t| }^{1/2})\), this proves \eqref{local-06}. Finally, \eqref{local-07} follows from \eqref{eq:lnv-jump0} and \eqref{local-02}. It completes the proof of Lemma \ref{thm-exsit2}.
\end{proof}

In the following lemma we derive an estimate for the quantity \(\mathscr{F}\). This higher-order regularity will be required later when we examine the pointwise behavior of the jumps \(\left\lbrack  \frac{\epsilon {u}_{x}}{v}\right\rbrack\).

\begin{lemma}\label{thm-exsit3}
Let \(\theta, \underline{v}, \hat{v}\) and \(M_{0}\) be as in Lemma \ref{thm-exsit2} and let \((v_{j},u_{k})\) be a solution of \eqref{eq:scheme} up to some time \(t_0\leq1\), with \(\mathscr{C}_{0}\leq M_{0}\), that satisfies \eqref{local-01}-\eqref{local-02} with constant \(C=C(\theta, \underline{v}, \hat{v}, M_{0})\). Then the bounds
\begin{align}
\displaystyle  &\mathscr{F}(t_0)\leq C\mathscr{C}_0, \label{local-11}\\
&\|\delta u(t)\|_{\infty}\leq C\mathscr{C}_0^{1/2}t^{-\frac{1}{2}-\frac{\theta}{4}},\label{local-12}\\
&\left\langle\frac{\epsilon\delta u_{j}}{v_{j}}-p_{j}\right\rangle_{\{t\geq \tau\}}^{1/2,1/4}\leq C\mathscr{C}_0^{1/2}\tau^{-\frac{3}{4}-\frac{\theta}{2}},\label{local-13}
\end{align}
hold for a new constant \(C=C(\theta, \underline{v}, \hat{v}, M_{0})\).
\end{lemma}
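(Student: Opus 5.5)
The plan is the standard Hoff-type higher-order energy estimate: differentiate the momentum equation of \eqref{eq:scheme} in time and test with a time-weighted $\dot u$. Differentiating \eqref{eq:scheme}$_2$ in $t$ gives
\[
\ddot u_k=-\delta\dot p_k+\delta\Big(\frac{\epsilon\delta\dot u}{v}-\frac{\epsilon(\delta u)^2}{v^2}\Big)_k ,
\]
using $\dot v_j=\delta u_j$. We multiply by $t^{\frac32+\theta}\dot u_k h$, sum over $k$ and integrate over $[0,t]$. Summation by parts (periodicity removes boundary terms) produces on the left
\[
\tfrac12 t^{\frac32+\theta}\sum_k\dot u_k^2h+\int_0^t s^{\frac32+\theta}\sum_j\frac{\epsilon(\delta\dot u_j)^2}{v_j}h\,ds .
\]
On the right there are three kinds of terms:
\begin{itemize}
\item $\int s^{\frac12+\theta}\sum\dot u_k^2h$, which is bounded by $\mathscr B$;
\item $\int s^{\frac32+\theta}\sum_j p'(v_j)\delta u_j\,\delta\dot u_j h$;
\item $\int s^{\frac32+\theta}\sum_j (\delta u_j)^2 v_j^{-2}\delta\dot u_j h$.
\end{itemize}
Note that the jump points $\pm N_*$ cause no trouble here. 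The scheme is fully discrete in $j$, and $\dot u$ is defined at every vertex, so no one-sided quantities appear.

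The second term is handled by Cauchy--Schwarz:
\[
\eta\int s^{\frac32+\theta}\|\delta\dot u\|^2+C_\eta\int s^{\frac32+\theta}\|\delta u\|^2\le \eta(\cdots)+C\mathscr A .
\]
The cubic term is bounded by
\[
\eta\int s^{\frac32+\theta}\|\delta\dot u\|^2+C\int s^{\frac32+\theta}\|\delta u\|_\infty^2\sum_j\delta u_j^2h .
\]
We then insert \eqref{eq:ux-infty}, which bounds $\|\delta u\|_\infty^2$ by $t^{-1/2}\mathscr B$, plus $t^{-1/4}\mathscr B^{1/2}$ times the piecewise norm of $\delta(\delta u/v)$, plus jump terms. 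The piecewise norm of $\delta(\delta u/v)$ is in turn controlled through \eqref{eq:vx2}, since $\epsilon\,\delta(\delta u/v)_k=\dot u_k+\delta p_k$ off $k=\pm N_*$. Together with $\mathscr A$ and $\mathscr B$, this makes the cubic contribution at most $C\mathscr C_0+C\mathscr C_0\,\mathscr F^{1/2}$, or a small multiple of $\mathscr F$. The $h^2\dot u_{\pm N_*}^2$ remainders in \eqref{eq:ux-infty} are bounded by $h\sum\dot u^2h$, so they are absorbed the same way.

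After absorbing, we obtain $\mathscr F(t_0)\le C\mathscr C_0$, including the $\delta(\delta u/v)$ part of the supremum; this is \eqref{local-11}.

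The bound \eqref{local-12} follows by reinserting $\mathscr F$ into \eqref{eq:ux-infty}:
\[
\|\delta u\|_\infty^2\le C\big(t^{-1/2}+t^{-1/4}t^{-\frac34-\frac\theta2}\big)\mathscr C_0+C\tilde{\mathscr C}.
\]
The worst power is $t^{-1-\theta/2}$, which gives the rate $t^{-\frac12-\frac\theta4}$. The jump term $\tilde{\mathscr C}$ is harmless since $t\le1$; one either assumes $\tilde{\mathscr C}\le C\mathscr C_0$ or absorbs it into the constant depending on $M_0$.

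For \eqref{local-13}, set $G_j=\epsilon\delta u_j/v_j-p_j$. The main point is that $\delta G_k=\dot u_k$ holds at every $k$, including $\pm N_*$, so $G$ has no jump even though $v$ does. Spatial H\"older-$\frac12$ continuity then follows from Cauchy--Schwarz with $\sum\dot u_k^2h\le C\mathscr C_0 t^{-\frac32-\theta}$, giving the factor $\tau^{-\frac34-\frac\theta2}$. In time, $\dot G_j$ involves $\delta\dot u_j$, $(\delta u_j)^2$ and $p'\delta u_j$. We use the same averaging trick as in the proof of \eqref{local-06}: average over $J$ cells with $Jh\sim|t'-t|^{1/2}$, use $\int s^{\frac32+\theta}\|\delta\dot u\|^2\le C\mathscr C_0$ together with \eqref{local-12}, and this yields the exponent $\frac14$ in time.

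The main obstacle is the cubic term. Closing it requires the $L^\infty$ bound on $\delta u$, which itself depends on $\mathscr F$ through the $\delta(\delta u/v)$ norm and on the jump terms at $\pm N_*$. I would first try to close it by choosing $t_0$ small and using the smallness of $h$, as was done for $\mathscr B$ in Lemma \ref{thm-exsit2}.
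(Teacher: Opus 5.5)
Your proposal follows the paper's proof essentially step for step. You test the time-differentiated momentum equation against $t^{\frac32+\theta}\dot u_k h$ and close it with \eqref{local-02} and \eqref{eq:ux-infty}; you reinsert $\mathscr F$ into \eqref{eq:ux-infty} to get \eqref{local-12}; and you use $\delta w_k=\dot u_k$ with the averaging trick to get \eqref{local-13}.

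The only soft spot, which the paper shares (it simply assumes $\tilde{\mathscr C}t\le\mathscr C_0$ in Lemma~\ref{thm-exsit2}), is the jump term in \eqref{local-12}. It cannot be absorbed into a constant depending on $M_0$, because the bound must scale like $\mathscr C_0$. Your other option, $\tilde{\mathscr C}\le C\mathscr C_0$, is however provable: if $J$ is the jump size, then on one side of each interface the piecewise bound on $\delta v_k(0)$ keeps $|v_j(0)-\bar v|\ge J/4$ over a length $\min\{c(l,L),\,J^2/(16\mathscr C_0)\}$, which forces $J^2\le C(l,L)\mathscr C_0$.
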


\begin{proof}
We differentiate the second equation in \eqref{eq:scheme} with respect to \(t\) , multiply the derivative by \(s^{\frac32+\theta}\dot{u}_kh\), sum and integrate to obtain
\[t^{\frac{3}{2}+\theta}\sum_{k}\frac{\dot{u}_{k}(t)^{2}}{2}h=C\int_{0}^{t}\sum_{k}s^{\frac{1}{2}+\theta}\dot{u}_{k}(s)^{2}hds-\int_{0}^{t}\sum_{j}s^{\frac{3}{2}+\theta}\delta\dot{u}_{j}\frac{d}{dt}\left(\frac{\epsilon\delta u}{v}-p\right)_{j}hds.\]
Elementary estimates based on \eqref{local-02} then show that
\[t^{\frac{3}{2}+\theta}\sum_{k}\dot{u}_{k}(t)^{2}h+\int_{0}^{t}\sum_{j}s^{\frac{3}{2}+\theta}\delta\dot{u}_{j}(s)^{2}hds\leq C\mathscr{C}_0.\]
Combining this estimate with \eqref{eq:scheme} and taking the appropriate supremum, we thus find that
\[\mathscr{F}(t)\leq C\mathscr{C}_0.\]
Inequality \eqref{local-11} follows immediately.

The regularity estimates \eqref{local-12}-\eqref{local-13} can be derived from \eqref{local-11} as follows. 
First, from \eqref{eq:ux-infty} and \eqref{local-02},
\[\begin{aligned}
\|\delta u\|_{\infty}^{2}& \leq C\left(t^{-\frac{1}{2}}\mathscr{B}+t^{-1-\frac{\theta}{2}}\mathscr{B}^{1/2}\mathscr{F}^{1/2}+(1+h^{1/2})\mathscr{C}_0^{1/2}+h^{1/2}t^{-\frac34-\frac{\theta}{2}}\mathscr{F}^{1/2}\right) \\
&\leq Ct^{-1-\frac{\theta}{2}}\mathscr{C}_{0},
\end{aligned}\]
for \(t\) and \(h=O(t)\) suitably small. This proves \eqref{local-12}. 
To prove \eqref{local-13}, we define \(w_j= \frac{\epsilon\delta u_{j}}{v_{j}}-p_{j}\) and note that \(\delta w_k=\dot{u}_k\). Thus from the definition of \(\mathscr{F}\), we have
\[\sum\delta w_{k}^{2}h\leq\sum_{k}\dot{u}_{k}^{2}h\leq Ct^{-\frac{3}{2}-\theta}\mathscr{F}(t),\]
so that, for \(0<\tau\leq t_{1}\leq t_{2}\),
\[\begin{aligned}
|w_{j}(t_{2})-w_{j}(t_{1})|& \leq\frac{1}{Jh}\sum_{l=j}^{j+J-1}\left|w_{l}(t_{2})-w_{l}(t_{1})\right|h+C\tau^{-\frac{3}{4}-\frac{\theta}{2}}\sqrt{Jh\mathscr{F}(\tau)} \\
&\leq\sqrt[]{\frac{|t_{2}-t_{1}|}{Jh}}\biggl[\int_{t_{1}}^{t_{2}}\sum_{l}\dot{w}_{l}(s)^{2}hds\biggr]^{1/2}+C\tau^{-\frac{3}{4}-\frac{\theta}{2}}\sqrt{Jh{\mathcal F}(\tau)}.
\end{aligned}\]
We note that
\(\dot{w}_{l}^{2}\leq C(\delta\dot{u}_{l}^{2}+\delta u_{l}^{4}+\|p_{v}\|_{\infty}^{2}\delta u_{l}^{2})\), so that, by the definitions of $\mathscr{B}$ and $\mathscr{F}$,
\[\left|w_j(t_2)-w_j(t_1)\right|\leq C\left|t_2-t_1\right|^{\frac14}\left(\tau^{-1/2}\mathscr{B}^{1/2}+\tau^{-\frac{3}{4}-\frac{\theta}{2}}\mathscr{F}^{1/2}\right)\leq C\left|t_2-t_1\right|^{\frac14}\mathscr{C}_0^{1/2}\tau^{-\frac{3}{4}-\frac{\theta}{2}},\]
where we have taken \(Jh=O(|t_{2}-t_{1}|^{1/4})\). The estimate yields \eqref{local-13} for \(\langle w_j\rangle =\big\langle\frac{\epsilon\delta u_{j}}{v_{j}}-p_{j}\big\rangle\). This completes the proof of Lemma \ref{thm-exsit3}.
\end{proof}

\subsection{Proof of Theorem \ref{thm:loc}}
\indent\qquad
In this subsection, we apply the difference scheme \eqref{eq:scheme} and the estimates of Lemmas \ref{thm-exsit2} and \ref{thm-exsit3} to prove Theorem \ref{thm:loc}. %when \(u_0\) is in \(H^{1}(\mathbb{T})\). The fact that the constant \(C\) of Theorems \ref{thm-exsit2} and \ref{thm-exsit3} is independent of \(\|u_{0x}\|_{L^2(\mathbb{T})}\) will then enable us to ``complete'' the solution operator, thereby including the more general initial data of Theorem \ref{thm:loc}.

\noindent\textbf{Proof of Theorem \ref{thm:loc}.} We choose \(h\) small and define \(x_k=kh\) for \(k=0,\pm 1,\dots,\pm N\), and \(x_j=jh\) for \(j=\pm\frac12,\pm\frac32,\dots, \pm(N-\frac12)\), just as in Subection 2.1.  We then take
\[\begin{aligned}&u_{k}(0)=\frac{1}{h}\int_{x_{k-\frac{1}{2}}}^{x_{k+\frac{1}{2}}}u_{0}(x)dx,\\&v_j(0)=v_0(x_j).\end{aligned}\]
We also define \(x_{\pm N_*}\) to be the mesh point \(x_k\) closest to \(\pm l\). 
It is then easy to see that
\begin{equation}\label{eq:p31}
	\mathscr{C}_0\leq CC_0,%\quad\mathscr{C}_1\leq CC_1,
\end{equation}
where the \(\mathscr{C}_{0}\) is defined in \eqref{eq:c00}. Since \(v_{j}(0)\in [b+2\underline{v},\hat{v}-\underline{v}]\) for all \(j\), we may apply Lemma \ref{thm-exsit2} to conclude that for \(h\leq h_{0}\) the scheme \eqref{eq:scheme} with these initial values are solvable up to time \(t_{0}=t_{0}(\theta, \underline{v},\hat{v}, M)\) has value \(v_{j}\in [b+\underline{v},\hat{v}]\), and satisfies the properties \eqref{local-02}-\eqref{local-07}
and \eqref{local-11}-\eqref{local-13}. 
We define the piecewise linear interpolants \(u^h(\cdot,t)\) and \(v^h(\cdot,t)\) by
\begin{equation*}
	\begin{aligned}
	\displaystyle  &u^h(\cdot,t)=\frac{x-x_k}{h}u_{k+1}(t)+\frac{x_{k+1}-x}{h}u_{k}(t),~~~&&x\in [x_k,x_{k+1}],\\
	&v^h(\cdot,t)=\frac{x-x_j}{h}v_{j+1}(t)+\frac{x_{j+1}-x}{h}v_{j}(t),~~~&&x\in [x_j,x_{j+1}],~~~j \neq \pm N_*-\frac12,
	\end{aligned}
\end{equation*}
with the exception that if \(j=\pm N_*-1/2\), then
\[v^h(x,t)=\begin{cases}v_j(t),&\quad x_j\leq x\leq \pm l\\v_{j+1}(t),&\quad \pm l<x\leq x_{j+1}.\end{cases}\]
The bounds \eqref{local-04}-\eqref{local-06} and \eqref{eq:p31} imply that the function \((v^h,u^h)\) is bounded and H\"{o}lder continuous in both \(x\) and \(t\) uniformly in \(h\), for \(h\leq h_{0}\) (\(v^{h}\) is only piecewise H\"{o}lder continuous in $x$ of course). 
It follows Arzelà–Ascoli theorem that a subsequence \((v^h,u^h)\) converges to a H\"{o}lder continuous function \((v,u)\), uniformly on compact sets in \(\{t\geq 0\}\) and \(v\) is piecewise H\"{o}lder continuous in \(x\). 
Conditions \eqref{eq:loc-v1}, \eqref{eq:loc-u1} and \eqref{eq:jump-loc} then follow directly from \eqref{local-04}-\eqref{local-07} and straightforward arguments based on \eqref{local-12}-\eqref{local-13} prove \eqref{eq:loc-u2} and \eqref{eq:loc-up}.

Next we define \(A^{h},B^{h}\), and \(F^{h}\), just like \(A, B\) and \(F\) in \eqref{eq:A}-\eqref{eq:F}, but with \((v,u)\) replaced by \((v^{h},u^h)\), and with the terms involving \(\left(\frac{u_{x}}{v}\right)_{x}\) omitted.
The bounds \eqref{local-01}, \eqref{local-11} and \eqref{eq:p31} then imply that
\begin{equation}\label{eq:p33}
	A^h+B^h+F^h\leq CC_0.
\end{equation}
It is easy to see that \eqref{eq:c0-b1}, except for the term \(\left(\frac{u_{x}}{v}\right)_{x}\), follows from \eqref{eq:p33}. For example, one of the bounds in \eqref{eq:p33} is that, for \(t\) fixed,
\[\|u_x^h(\cdot,t)\|^2\leq CC_0t^{-1/2}.\]
Thus a further subsequence \(u_{x}^{h_m}(\cdot,t)\) converges weakly in \(L^{2}(\mathbb{T})\) to a function \(w\in L^{2}(\cdot,t)\), with \(\|w\|^2\leq CC_0t^{-1/2}\). But since \(u_{x}^{h_m}(\cdot,t)\to u_{x}(\cdot,t)\) in the sense of distributions, we must have that \(u_{x}(\cdot,t)=w\in L^{2}(\mathbb{T})\) and \(\|u_{x}(\cdot,t)\|^{2}\leq CC_{0}t^{-1/2}\), as required. Thus it has been established that \((v,u)\) is indeed a weak solution of \eqref{ns-lagrange}. 
The proof of Theorem \ref{thm:loc} is complete. $\hfill\square$

\section{Global existence and large-time behavior}\label{sec3}
\indent\qquad
In this section, we will give a careful derivation of the a priori estimates, which are then applied together with the local existence theorem to complete the proof of Theorem \ref{thm1}.

\subsection{A priori estimates}\label{a-priori}
\indent\qquad
In this subsection, we assume that the quantities involved in the a priori estimates possess all the regularity properties required for the analysis. This can be guaranteed by the local existence theorem in Section \ref{sec:local}.
For simplicity, we assume the viscosity coefficient $\epsilon=1$ in the subsequent analysis. 
We define
\begin{equation}\label{eq:E0}
	E_{0}= \left\|\left(v_{0}-\tilde{v}, u_{0}-\tilde{u}, u_{0 x}\right)\right\|^{2}+\Big\|\Big(v_{0 x},\big(\frac{u_{0x}}{v_{0}}\big)_{x}\Big)\Big\|_\Sigma^{2}.
\end{equation}
We also define \(D(t_0)\) and \(G(t_0)\) to be the same as \(B(t_0)\) and \(F(t_0)\), but with \(\sigma\) taken to be identically one. For the neighborhood $D_\alpha$ and $D_\beta$ defined in \eqref{eq:dd}, there is a function $S_\alpha(v) : D_\alpha \rightarrow \mathbb{R}$ satisfying
\begin{equation}\label{s-def}
  S_\alpha(\alpha_0) = 0, \quad S_\alpha'(v) = -p(v), \quad S_\alpha''(v)= -p'(v)> 0.
\end{equation}
A similar entropy function $S_\beta$ is assumed to exist in a neighborhood $D_\beta$ of $\beta_0$. 
In the following lemma, we derive certain a priori estimates required for the analysis of the large-time behavior of the solution. 
\begin{lemma}\label{lem-apriori}
Let $D_\alpha, D_\beta$ and $(\tilde{v},\tilde{u})$ be as in Theorem \ref{thm1}, and let $S_\alpha$ and $S_\beta$ be as described above in \eqref{s-def}. Then there exist positive constants $\delta_{2} $ and $C_{2}$ such that, if a weak solution $ (v, u) $ exists on $ \left[0, t_{0}\right] $ and satisfies
\begin{align*}
&v = 
\begin{cases}
D_\alpha, & x \in I_1 \cup I_3, \\
D_\beta, & x \in I_2,
\end{cases}  \\
&A(t)+D(t)+G(t)< \infty,  \\
%\end{align*}\begin{align*}
&E_{0} \leq \delta_{2},
\end{align*}
then it holds 
\begin{equation*}
  A(t)+D(t)+G(t) \leq C_{2} E_{0}.
\end{equation*}
\end{lemma}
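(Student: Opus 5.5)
The plan is to derive the three estimates for $A$, $D$, $G$ in sequence by weighted energy methods. Each is carried out piecewise on $I_1,I_2,I_3$ and glued together at $x=\pm l$ through the jump condition \eqref{jump0}, which with $\epsilon=1$ reads $[\![u]\!]=0$ and $[\![p-u_x/v]\!]=0$. Then close by continuity using smallness of $E_0$.

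\textbf{Step 1 (basic energy).} Let $S=S_\alpha$ on $I_1\cup I_3$ and $S=S_\beta$ on $I_2$. Use the relative entropy
\[
\Phi(v)=S(v)-S(\tilde v)-S'(\tilde v)(v-\tilde v)\sim (v-\tilde v)^2 ,
\]
which is valid because $S''=-p'>0$ on $D_\alpha\cup D_\beta$. Multiply \eqref{ns-lagrange}$_2$ by $u$ and add $\Phi(v)_t=(p(\tilde v)-p(v))u_x$ on each $I_i$. The flux terms $\big(-(p-\tilde p)+u_x/v\big)u$ are continuous across $\pm l$ by the jump condition, so they cancel on summing, and periodicity kills the ends. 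This gives $\frac{d}{dt}\int(\tfrac12u^2+\Phi)\,dx+\int u_x^2/v\,dx=0$, which bounds $\|(v-\tilde v,u)\|^2+\int_0^t\|u_x\|^2$ by $CE_0$. Note that $\tilde p=p(\alpha_0)=p(\beta_0)$ is a single constant on all of $\mathbb T$, which is what makes the gluing work.

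\textbf{Step 2 ($\|v_x\|_{\Sigma}$).} As in \eqref{eq:vx2}, write $(\ln v)_{xt}=u_t+p_x$ on each $I_i$. Multiply by $(\ln v)_x$, integrate over each $I_i$, and use $u(\ln v)_{x}$ with the boundary terms at $\pm l$. The dissipative term is $-\int p'(v)v_x^2/v\ge c\|v_x\|^2_\Sigma$ because $p'<0$ on the neighborhoods; this is where the near-ideal assumption in $D_\alpha,D_\beta$ is essential. The boundary terms $u[\![\ln v]\!]$ evaluated at $\pm l$ are the delicate part. I plan to handle them by integrating in time and using $u_t=(u_x/v-p)_x$. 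Alternatively, I can multiply by $u$ only after subtracting the spatial average; the mean of $u$ is conserved and equals $0$. Then $\sup|u|\le C\|u_x\|$ gives $\int_0^t|u(\pm l)|^2\le CE_0$. The jump $[\![\ln v]\!]$ itself stays near $\ln(\beta_0/\alpha_0)$ and is bounded. The remaining term $\int p_x u$ is absorbed after using Young's inequality together with Step 1.

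Step 2 also yields decay of $\|v-\tilde v\|$ through a Poincaré inequality on each $I_i$. This uses the mass constraint $\int_{I_i}(v-\tilde v)\,dx$, and controlling this constraint's evolution, which is driven by $u(\pm l,t)$, is the point where I expect the main difficulty. The interface particle paths are fixed, so $\frac{d}{dt}\int_{I_2}(v-\tilde v)=u(l)-u(-l)$. I would therefore estimate $\int_{I_i}(v-\tilde v)$ via the time integral of $u_x$ together with the momentum balance, using that $p(v)-u_x/v$ is spatially nearly constant.

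\textbf{Step 3 ($D$ and $G$).} Multiply the momentum equation by $u_t$ and integrate. The boundary terms vanish since $u_t$ is continuous and the flux $u_x/v-p$ is continuous. This gives $\sup\|u_x\|^2+\int\|u_t\|^2\le C(E_0+\int|p_tu_x|)$, with $|p_t|\le C|u_x|$, and the cubic term is controlled as in \eqref{eq:ux-j3} via the $L^\infty$ bound \eqref{eq:ux-infty} (now without $h$ terms, and with the jump of $u_x/v$ equal to the jump of $p$, which is small because $v$ is near the Maxwell points). Then differentiate the momentum equation in $t$, multiply by $u_t$, and proceed similarly to get $G$; the $\|(u_x/v)_x\|_\Sigma$ part follows from the equation. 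All nonlinear terms come out of the form $C(A+D+G)^{3/2}$, so $A+D+G\le CE_0+C(A+D+G)^{3/2}$, and a continuity argument with $E_0\le\delta_2$ closes the estimate.
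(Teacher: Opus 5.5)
Your outline is correct, and apart from Step 2 it is the paper's proof. Step 1 is the entropy identity \eqref{slr1}: your relative entropy $\Phi$ is $S_\alpha$ or $S_\beta$ plus the linear term $\tilde p(v-\tilde v)$, which the paper instead removes via $\int_{\mathbb T}(v-\tilde v)\,dx=0$. Step 3 is the paper's $u_t$-multiplier estimate \eqref{u-x1} followed by the time-differentiated estimate \eqref{u-t1}. In the $L^\infty$ bound for $u_x$ you are more careful than the paper. The Sobolev inequality used before \eqref{u-x3} ignores that $u_x/v$ jumps by $[\![p]\!]\neq0$ at $\pm l$; as stated it fails for piecewise-constant $u_x/v$. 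You keep the jump term, which is harmless since $|[\![p]\!]|$ is bounded and multiplies $\int_0^t\|u_x\|^2\le CE_0$.

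Step 2 is where the routes differ. The paper bounds $\int_\Sigma (v_x/v)u_t\,dx$ by Cauchy--Schwarz, gets $D_1\le C(E_0+\int_0^t\|u_t\|^2)$ as in \eqref{v-x6}, and closes this jointly with the $D$-estimate. Your alternative writes $u_t(\ln v)_x=\partial_t\big((u-\bar u)(\ln v)_x\big)-(u-\bar u)(u_t+p_x)$ and uses Poincar\'e, $\int_0^t\|u-\bar u\|^2\le C\int_0^t\|u_x\|^2\le CE_0$. This is the global version of the paper's discrete argument after \eqref{eq:vx2}, where $\mathscr{A}t$ replaces Poincar\'e. It gives $\sup_t\|v_x\|_{L^2_\Sigma}^2+\int_0^t\|v_x\|_{L^2_\Sigma}^2\le CE_0$ on its own, so Step 2 no longer depends on Step 3. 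The paper's version avoids the mean-zero normalization of $u$ but couples the two estimates.

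Two cautions. First, that computation produces \emph{no} boundary terms at $\pm l$: $(\ln v)_{xt}=u_t+p_x$ holds pointwise on each fixed $I_i$, and nothing is integrated by parts in $x$. Do not create terms like $u(\pm l)[\![\ln v]\!]$. Since $[\![\ln v]\!]\approx\ln(\beta_0/\alpha_0)=O(1)$, such a term is linear in $u$. It is then only $O(E_0^{1/2})$ at a fixed time and $O(t^{1/2}E_0^{1/2})$ after time integration, and your bound $\int_0^t|u(\pm l)|^2\le CE_0$ does not reduce it to $CE_0$. So the handling you describe would break the linear bound $C_2E_0$ if those terms were actually present.

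Second, the Poincar\'e-on-$I_i$ decay discussion is unnecessary for this lemma. $A$ contains only $\sup_t\|v-\tilde v\|^2$, which Step 1 already controls. Moreover, the piecewise masses $\int_{I_i}(v-\tilde v)\,dx$ are not conserved, being driven by the values of $u$ at the endpoints of $I_i$. That detour would only import a difficulty the lemma does not require.
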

\begin{proof}
Throughout this proof, \( C \) denotes a generic positive constant, which is independent of \( t_{0} \). First, we compute from \eqref{ns-lagrange} and \eqref{s-def} that
\begin{align*}
\left(S_\alpha\right)_t&=-p(v)v_t=-p(v)u_x\\
&=\left(-p(v)u\right)_x+p(v)_xu\\
&=\left(-p(v)u\right)_x+\left(-u_t+\big(\frac{u_{x}}{v}\big)_{x}\right)u\\
&=-\left(\big(p-\frac{u_{x}}{v}\big)u\right)_x-\left(\frac{u^2}{2}\right)_t-\frac{u_{x}^2}{v}.
\end{align*}
Integrating over the set \( [0, t] \times(I_1\cup I_3) \), we obtain
 \begin{align*}
 \int_{I_1\cup I_3} S_\alpha(x, \cdot) d x\Big|_0^{t}=&-\int_{I_1\cup I_3} \frac{u^2}{2}d x\Big|_0^{t}-\int_0^t \int_{I_1\cup I_3}\frac{u_{x}^2}{v}d x d s\\&-\int_0^t\left(\big(p-\frac{u_{x}}{v}\big)u\right)((-l)-, s)d s+\int_0^t\left(\big(p-\frac{u_{x}}{v}\big)u\right)(l+, s)d s.
\end{align*}
Similarly, it holds for \( S_\beta \) that
\begin{align*}
\left(S_\beta\right)_t=-\left(\big(p-\frac{u_{x}}{v}\big)u\right)_x-\left(\frac{u^2}{2}\right)_t-\frac{u_{x}^2}{v}.
\end{align*}
Integrating over the set \( [0, t] \times I_2 \), we obtain
 \begin{align*}
 \int_{I_2} S_\beta(x, \cdot) d x\Big|_0^{t}=&-\int_{I_2} \frac{u^2}{2}d x\Big|_0^{t}-\int_0^t \int_{I_2}\frac{u_{x}^2}{v}d x d s\\&-\int_0^t\left(\big(p-\frac{u_{x}}{v}\big)u\right)(l-, s)d s+\int_0^t\left(\big(p-\frac{u_{x}}{v}\big)u\right)((-l)+, s)d s.
 \end{align*}
Adding and applying the jump conditions \eqref{jump0}, we obtain 
 \begin{equation}\label{slr1}
\begin{aligned}
 &\int_{I_1\cup I_3} S_\alpha(x, \cdot) d x\Big|_0^{t}+\int_{I_2} S_\beta(x, \cdot) d x\Big|_0^{t}%\\&
 +\int\frac{u^2}{2} d x\Big|_0^{t}+\int_0^t \int \frac{u_{x}^2}{v} d x d s=0.
 \end{aligned}   
 \end{equation}  
On the other hand, \eqref{s-def} shows that, for \( v \in D_{\alpha} \),
\[
S_\alpha\geq -p(\alpha_0)(v-\alpha_0)+C^{-1}(v-\alpha_0)^2.
\]
Similarly, it holds for \( S_{\beta} \) in \( D_{\beta} \) that
\[
S_\beta\geq -p(\beta_0)(v-\beta_0)+C^{-1}(v-\beta_0)^2.
\]
Applying the properties \eqref{steady-sol} of the steady-state solution \( (\tilde{v}, \tilde{u}) \), we then obtain
\begin{equation}\label{eq:v-l2}
\begin{aligned}
\displaystyle & \int_{I_1\cup I_3}C^{-1}(v-\alpha_0)^2dx+\int_{I_2}C^{-1}(v-\beta_0)^2dx\\
&\leq \tilde{p}\int_{I_1\cup I_3}(v-\alpha_0)dx+\tilde{p}\int_{I_2}(v-\beta_0)dx+\int_{I_1\cup I_3} S_\alpha(x, t) d x+\int_{I_2} S_\beta(x,t) d x\\
&\leq \tilde{p}\int_{-L}^{L}(v-\tilde{v})dx+\int_{I_1\cup I_3} S_\alpha(x, t) d x+\int_{I_2} S_\beta(x,t) d x\\
&\leq \int_{I_1\cup I_3} S_\alpha(x, t) d x+\int_{I_2} S_\beta(x,t) d x.
\end{aligned}  
\end{equation}
Thus we obtain from \eqref{slr1} and \eqref{eq:v-l2} that
 \begin{equation}\label{slr2}
\begin{aligned}
& \int\left(C^{-1}(v-\tilde{v})^2+\frac{u^2}{2}\right)d x+\int_0^t \int \frac{u_{x}^2}{v} d x d s \leq CC_0,
 \end{aligned}   
 \end{equation} 
then it shows that
 \begin{equation}\label{slr3}
\int\left((v-\tilde{v})^2+u^2\right)d x+\int_0^{t_0} \int u_{x}^2  d x d t\leq CC_0. 
 \end{equation}

Next we estimate the term \( \|v_{x}\|_\Sigma \) appearing in the definition of \( A \). By \eqref{ns-lagrange}, it has
\begin{equation}\label{v-x1}
\big(\frac{v_x}{v}\big)_t=\big(\ln v\big)_{xt}=\big(\frac{v_t}{v}\big)_x=\big(\frac{u_x}{v}\big)_x=u_t+p(v)_x.
\end{equation}
Applying the chain rule to compute \( p(v)_x \), we find that
\[
p(v)_{x}=-p'(v)v_x=-\alpha(v) \frac{v_x}{v},
\]
where \( \alpha(v) \) is a smooth function of \( v \) and away from the line \( x=\pm l \),
\begin{equation}\label{v-x2}
  \alpha(v)=-p'(v)v \geq C^{-1}.
\end{equation}
We substitute this into \eqref{v-x1},  multiply it by $\displaystyle\frac{v_x}{v}$, and integrate with respect to $x$ to obtain
\begin{align*}
\frac{1}{2}\frac{d}{dt} \int_{\Sigma} \big(\frac{v_x}{v}\big)^2d x+\int_{\Sigma} \alpha \big(\frac{v_x}{v}\big)^2 d x&=\int_{\Sigma} \frac{v_x}{v} u_t  d x.
\end{align*}
Applying the Cauchy-Schwarz inequality and integrating with respect to $t$, we then conclude that
\begin{equation}\label{v-x3}
\begin{aligned}
\sup_{0 \leq t \leq t_0}\|v_x\|_{L^2_\Sigma}^2+ \int_0^{t_0}\int_{\Sigma}  v_x^2 dxdt\leq C\left(C_0 +\int_0^{t_0} \int_{\Sigma} 
 u_t^2 d x d t\right).
\end{aligned}
\end{equation}
Defining
\begin{equation}\label{v-x4}
D_1\triangleq\sup_{0 \leq t \leq t_0} \|v_x\|_{L^2_\Sigma}^2 +\int_0^{t_0}\int_{\Sigma}  v_x^2 dxdt.
\end{equation}
From \eqref{v-x3}, we thus arrive at
$$D_1\leq  C\left[C_0+C_0D_1 +\int_0^{t_0} \int_{\Sigma} 
u_t^2d x d t\right],$$
so that, if $C_0$ is small, this yields
\begin{equation}\label{v-x6}
D_1\leq  C\left[C_0+\int_0^{t_0} \int_{\Sigma} 
u_t^2d x d t\right].
\end{equation}

Next, we derive an estimate for \( \left\|u_{x}(\cdot, t)\right\| \). Multiplying the second equation in \eqref{ns-lagrange} by \( u_{t} \) and integrating, one obtains
\begin{align*}
\displaystyle  \int u_t^2 d x&=\int_{I_1\cup I_2\cup I_3}\left(-(p-\tilde p)_x+\left(\frac{u_x}{v}\right)_x\right) u_{t} d x\\
&=\left[(p-\frac{u_x}{v})u_t-\tilde p u_{t}\right](l,t)+\left[(p-\frac{u_x}{v})u_t-\tilde p u_{t}\right](-l,t)+\int u_{x t}\left((p-\bar p)-\frac{u_x}{v}\right) d x\\
&=\int u_{x t}\left((p-\bar p)-\frac{u_x}{v}\right) d x,
\end{align*}
which we have used the jump condition \eqref{jump0}. Integrating above equality by parts in time and rearranging, we get
\begin{align*}
\int \frac{u_x^2}{2 v}\Big|_0^td x+\int_0^t \int u_t^2 d x d s
&=\int u_x(p-\bar p)\Big|_0^t d x-\int_0^t \int u_x p_t d x d s-\int_0^t \int \frac{u_x^2v_t}{2 v^2}  d x d s.
\end{align*}
Applying the estimate \eqref{slr3}, this finally leads to
\begin{equation}\label{u-x1}
\begin{aligned}
\sup_{0 \leq t \leq t_0}\|u_x\|^2+ \int_0^{t_0}\int u_t^2dxdt\leq C\left[C_0 +\int_0^{t_0} \int|u_x|^3d x d t\right].
\end{aligned}
\end{equation}
However, from the second equation in \eqref{ns-lagrange} and using \eqref{slr3}, \eqref{v-x3}, \eqref{v-x6}, have that
\begin{align*}
\int_0^{t_0}\int_{\Sigma}\left[\left(\frac{u_x}{v}\right)_x\right]^2 d x d t &\leq C \int_0^{t_0}\int_{\Sigma}\left(u_t^2+v_x^2\right) d x d t\leq C\left[C_0+\int_0^{t_0}\int_{\Sigma} u_t^2 d x d t\right].
\end{align*}
Defining
\[
D_2\triangleq\sup_{0 \leq t \leq t_0} \|u_x\|^2 +\int_0^{t_0} \|u_t\|^2 d t+\int_0^{t_0} \int_{\Sigma}\left[\left(\frac{u_x}{v}\right)_x\right]^2 d x d t,
\]
then
\begin{equation}\label{u-x2}
\begin{aligned}
D_2\leq C\left[C_0+\int_0^{t_0} \int\left(u_x^3+\left(\frac{\chi_x^2}{v^2}\right)_x^2\right) d x d t\right].
\end{aligned}
\end{equation}
Using elementary Sobolev inequality, together with our assumed bounds for \( v \), we have
\[
\left\|u_x(\cdot, t)\right\|_{\infty} \leq C\left(\int u_x^2 d x\right)^{1/4}\left(\int_\Sigma\left[\left(\frac{\epsilon u_x}{v}\right)_x\right]^2 d x\right)^{1/4},
\]
so that
\begin{equation}\label{u-x3}
\begin{aligned}
\int_{0}^{t_{0}} \int\left|u_{x}\right|^{3} d x d t &\leq C \int_{0}^{t_{0}}\left(\int u_{x}^{2} d x\right)^{5/4}\left(\int_\Sigma\left[\left(\frac{\epsilon u_{x}}{v}\right)_{x}\right]^{2} d x\right)^{1/4} d t \\
&\leq C \sup_{0 \leq t \leq t_{0}}\|u_{x}\|_{L^2}\left(\int_{0}^{t_{0}} \|u_{x}\|_{l^2}^{2}d t\right)^{3/4}\left(\int_{0}^{t_{0}} \int_\Sigma\left[\left(\frac{\epsilon u_{x}}{v}\right)_{x}\right]^{2} d x d t\right)^{1/4} \\
&\leq C C_{0}^{3/4} D_{2}^{3/4}.
\end{aligned}
\end{equation}
Combining \eqref{u-x2}, \eqref{u-x3} and \eqref{v-x6}, it shows that
\begin{equation}
  D_2\leq CC_0,\quad D_1\leq CC_0,\quad 0\leq t\leq t_0.
\end{equation}
From above, we  prove that
\begin{equation}\label{ad-1}
  A(t)+D(t)\leq CC_0,\quad 0\leq t\leq t_0.
\end{equation}

Finally, we estimate the functional $G(t)$. The second equation in \eqref{ns-lagrange} shows that
$$u_{tt}=\big(\frac{u_{x}}{v}\big)_{xt}-p(v)_{xt}.$$
Multiplying both sides by $u_t$ and integrating with respect to $x$, we obtain
\[
\frac{d}{dt}\int \frac{u_t^2}{2} d x=\int u_{x t}\left(p(v)_t-\big(\frac{u_x}{v}\big)_t\right) d x,
\]
where we have again used the jump condition \eqref{jump0}. Integrating by parts in time and rearranging, we get
\begin{equation}\label{u-t1}
 \begin{aligned}
\int \frac{u_t^2}{2}\Big|_0^{t}d x+\int_0^t \int \frac{u_{xt}^2}{v} d x d s
&=\int_0^t \int u_{xt} p_t d x d s+\int_0^t \int \frac{u_x^2u_{xt}}{v^2} d x d s.
\end{aligned} 
\end{equation}
Applying the estimate \eqref{ad-1}, we have 
\begin{equation}\label{u-t2}
 \begin{aligned}
&\int u_t^2\Big|_0^{t_0}d x+\int_0^{t_0} \int u_{xt}^2 d x d t\\
&\leq C\int_0^{t_0} \int v_t^2 d x d s+C\int_0^{t_0} \int u_x^4 d x d t\\
&\leq C\int_0^{t_0} \int u_x^2 d x d s+C\int_0^{t_0}\|u_x\|^2\|u_x\|_{L^\infty}^2 d t\\
&\leq CC_0+C\sup_{0 \leq t \leq t_0}\|u_x\|^2\int_0^{t_0}\|u_x\|\|\big(\frac{u_x}{v}\big)_x\|_\Sigma d t\\
&\leq CC_0.
\end{aligned} 
\end{equation}
On the other hand, f the second equation in \eqref{ns-lagrange} gives
\begin{equation}\label{u-t3}
\begin{aligned}
\int_\Sigma\big(\frac{u_{x}}{v}\big)_{x}^2dx
&\leq C\int_\Sigma\left(u_t^2+v_x^2\right)dx\leq C(\int u_t^2dx+C_0).
\end{aligned}
\end{equation}
Combining \eqref{u-t2} and \eqref{u-t3}, we conclude that
\begin{equation}\label{ad-2}
  G(t)\leq CC_0,\quad 0\leq t\leq t_0.
\end{equation}
This completes the proof of Lemma \ref{lem-apriori}.
\end{proof}

\subsection{Proof of Theorem \ref{thm1}}
\indent\qquad
In this subsection, we complete the proof of Theorem \ref{thm1} by applying the local existence result of Theorem \ref{thm:loc} and the a priori estimates of Lemma \ref{a-priori}.\\

\noindent\textbf{Proof of Theorem \ref{thm1}.}~
First, we take \({M} = 1\) in Theorem \ref{thm:loc} and we fix the resulting constants \({\tau }_{1}\) and \({C}_{1}\), then Theorem \ref{thm:loc}  applies to show that there is a solution \((v,u)\) defined on $[0,2\tau_1]$, which satisfies
\begin{align}
&v\in [b+\underline{v},\bar{v}] ,\quad 0\leq t\leq 2\tau_1, \label{eq:3.1}\\
&A(t)  + B(t)  + F(t)  \leq  {C}_{1}C_0 ,\quad\;0 \leq  t \leq  2{\tau }_{1}.\label{eq:3.2}
\end{align}
Without loss of generality, we let \({\tau }_{1} \leq  1\) and \({C}_{1} \geq  1\).
Equation \eqref{eq:3.2} implies that
\begin{equation}\label{eq:3.3}
	\|(v-\tilde{v})(\cdot,t)\|_{H^1_\Sigma}\leq C_1\tau_1^{-1}C_0,\quad \|u(\cdot,t)\|_{H^1}\leq C_1\tau_1^{-1}C_0,\quad 0\leq t\leq 2\tau_1,
\end{equation}
which shows that, if \({\delta }_{0}\) is sufficiently small,
\begin{equation}\label{eq:3.4}
	v \in 
\begin{cases}
D_\alpha, & x \in I_1 \cup I_3, \\
D_\beta, & x \in I_2,
\end{cases}  \quad \tau_1\leq t\leq 2\tau_1.
\end{equation}
We estimate the quantity \({E}_{0}\) appearing in Lemma \ref{a-priori}, taking \(t = {\tau }_{1}\) as the new initial time. Equation \eqref{eq:3.2} also implies that
\[
\|\left(\frac{u_x}{v}\right)_x(\cdot, t)\|_{L^2_\Sigma}^2\leq  {\tau }_{1}^{-3}F\left( {\tau }_{1}\right)  \leq  {C}_{1}{\tau }_{1}^{-3}C_0.
\]
Combining this estimate with \eqref{eq:3.3}, we then obtain that, at time \(t = {\tau }_{1}\) ,
\begin{equation}\label{eq:3.5}
	\|(v-\tilde{v},u,u_x)(\cdot,t)\|^2+\|\big(v_x,\big(\frac{u_x}{v}\big)_x\big)(\cdot, t)\|_{L^2_\Sigma}^2 \leq  {C}_{3}{\tau }_{1}^{-3}C_0,
\end{equation}
where the constant \({C}_{3}\) is a multiple of \({C}_{1}\) , with \({C}_{3} \geq  {C}_{1}\).
Now define \({\tau }_{k} = k{\tau }_{1},~k = 1,2,\ldots\). Theorem \ref{thm:loc} therefore applies at initial time \({\tau }_{1}\), proving that the solution exists up to time \({\tau }_{3}\) and satisfies
\begin{equation}\label{eq:3.6}
{A}_{1}\left( {\tau }_{3}\right)  + {B}_{1}\left( {\tau }_{3}\right)  + {F}_{1}\left( {\tau }_{3}\right)  \leq  {C}_{1}{C}_{2}{C}_{3}{\tau }_{1}^{-3}C_0.
\end{equation}
Here \({A}_{1}, {B}_{1}\), and \({F}_{1}\) are the same as \(A, B\), and \(F\), but with initial time taken to be \(t = {\tau }_{1}\) rather than \(t = 0\).
Equation \eqref{eq:3.6} shows that
\[
\|(v-\tilde{v},u)(\cdot,t)\|_{H^1_\Sigma}\leq C_1{C}_{2}{C}_{3}{\tau }_{1}^{-4}C_0,\quad \tau_1\leq t\leq \tau_3,
\]
so that, if \({\delta }_{0}\) is sufficiently small, we have
\begin{equation}\label{eq:3.4-1}
	v \in 
\begin{cases}
D_\alpha, & x \in I_1 \cup I_3, \\
D_\beta, & x \in I_2,
\end{cases} 
\end{equation}
for \({\tau }_{2} \leq  t \leq  {\tau }_{3}\). 
Lemma \ref{a-priori} therefore applies that, by the bound in \eqref{eq:3.5} for \({E}_{0}\),
\begin{equation}\label{eq:3.7}
	{A}_{1} + {D}_{1} + {G}_{1} \leq  {C}_{2}{C}_{3}{\tau }_{1}^{-3}C_0,\quad{\tau }_{1} \leq  t \leq  {\tau }_{3},
\end{equation}
where \({{A}_{1},{D}_{1}}\) and \({G}_{1}\) are the same as \(A, D\) and \(G\), but with initial time \(t = {\tau }_{1}\) rather than \(t = 0\).
Equation \eqref{eq:3.7} gives that
\begin{equation}\label{eq:3.8}
	\|(v-\tilde{v}, u)(\cdot,t)\|_{H^1_\Sigma}\leq {C}_{2}{C}_{3}{\tau }_{1}^{-3}C_0,\quad \tau_1\leq t\leq \tau_3.
\end{equation}
Thus we can still take $\tau_2$ as the initial time, and by induction we prove that a solution \((v,u)\) exists for all time and satisfies the estimate \eqref{eq:e0-b1} with constant \(C = {C}_{1} + {C}_{2}{C}_{3}{\tau }_{1}^{-3}\).

Next, we only need to prove the long-time behavior \eqref{large-time}.
We let
\[
f(t)  =\int_{\mathbb{T}}{u}_{x}^{2}(x,t) dx,
\]
using \eqref{eq:e0-b1}, we get
\[
\operatorname{Var}f\Big|_1^{\infty} = \int_1^\infty|f'(t)|dt\leq 2\int_1^\infty\int_{\mathbb{T}} |u_xu_{xt}| dxdt\leq 2C\delta,
\]
This proves that \(f(t)\) has a limit as \(t \rightarrow  \infty\). Since \(f\) is integrable on \([1,\infty )\) by \eqref{eq:e0-b1}, this limit must be zero. Thus
\begin{equation}\label{eq:large-u}
\| {u}\left( {\cdot ,t}\right) {\|}_{{L}^{\infty }}^{2} \leq  2\| u\left( {\cdot ,t}\right)\| \| {u}_{x}\left( {\cdot ,t}\right) \| \leq  {2A}{(t) }^{1/2}f{(t) }^{1/2} \rightarrow  0\;\text{ as }t \rightarrow  \infty . 
\end{equation}
The argument for \(v\) is somewhat more involved. We define
$$g_i(t)=\frac12\int_{I_i}\left(\frac{v_x}{v}\right)^2dx,\quad i=1,2,3.$$
From \eqref{ns-lagrange}, we have 
$${\left(\frac{v_x}{v}\right)}_{t}={\left(\frac{v_t}{v}\right)}_{x}={\left(\frac{u_x}{v}\right)}_{x}=u_t+p(v)_x,$$
multiplying this $\left(\frac{v_x}{v}\right)_{x}$ and integrating over $I_i, i=1,2,3,$
\[
{g}_i^{\prime}(t)  = \int_{I_i}\frac{v_x}{v}{\left(\frac{v_x}{v}\right)}_{t}{dx}= \int_{I_i}\frac{v_x}{v}\left( {{u}_{t} + {p(v)}_{x}}\right) {dx} = \int_{I_i}\frac{v_x}{v}{u}_{t}dx +\int_{I_i} \frac{\gamma(v)}{2}\left(\frac{v_x}{v}\right)^2{dx},
\]
where $\gamma(v)=2vp'(v)<0$ in $D_\alpha \cup D_\beta$. Applying the mean value theorem for integrals, we may write
\[\int_{I_i} \frac{\gamma(v)}{2}\left(\frac{v_x}{v}\right)^2{dx}=\tilde{\gamma}(t)\int_{I_i} \frac{1}{2}\left(\frac{v_x}{v}\right)^2{dx}=\tilde{\gamma}(t)g_i(t),
\]
for a different function \(\tilde{\gamma}(t)\), which satisfies \(\tilde{\gamma}(t)\leq   - 1/C\) for \(t \geq  1\) ( \(C\) now denotes a generic positive constant). Then
\[
{g}_i^{\prime}(t)  = {\tilde{\gamma} g_i} + \int_{I_i}\frac{v_x}{v}{u}_{t}dx.
\]
Solving this first-order ordinary differential equation, we obtain
\begin{align*}
\displaystyle  g_i(t)  &= \exp \left( \int_{1}^{t}\tilde{\gamma}(s)ds\right) g_i(1)  + \int_{1}^{t}\int_{I_i}\exp \left( {\int_{s}^{t}\tilde{\gamma}(\tau) {d\tau }}\right)\frac{v_x}{v}{u}_{t}{dxds}\\&\triangleq K_1+K_2.
\end{align*}
Firstly, for $L_1$, we have
$$K_1\leq C{e}^{\left( {1 - t}\right) /C} \rightarrow  0\quad \text{ as }~~ t \rightarrow  \infty.$$
Next, we write the term $K_2$ as the sum of two integrals, one for \(1 \leq  s \leq  t/2\) , the other for \(t/2 \leq  s \leq  t\) . Applying \eqref{eq:e0-b1}, one has
\begin{align*}
\displaystyle  K_2  &= \int_{1}^{t/2}\int_{I_i}\exp \left( {\int_{s}^{t}\tilde{\gamma}(\tau) {d\tau }}\right)\frac{v_x}{v}{u}_{t}{dxds}+\int_{t/2}^{t}\int_{I_i}\exp \left( {\int_{s}^{t}\tilde{\gamma}(\tau) {d\tau }}\right)\frac{v_x}{v}{u}_{t}{dxds}\\
&\leq C\left(\int_{1}^{t/2}\int_{I_i}e^{(s-t)/C}v_x^2{dxds}\right)^{1/2}\left(\int_{1}^{t/2}\int_{I_i}u_t^2{dxds}\right)^{1/2}\\&\quad+
C\left(\int_{t/2}^{t}\int_{I_i}e^{(s-t)/C}v_x^2{dxds}\right)^{1/2}\left(\int_{t/2}^{t}\int_{I_i}u_t^2{dxds}\right)^{1/2}\\
&\leq C\left(\sup_{t}\int_{I_i}v_x^2dx\right)^{1/2}\left(\int_{1}^{t/2}e^{(s-t)/C}ds\right)^{1/2}\\&\quad+
C\left(\sup_{t}\int_{I_i}v_x^2dx\right)^{1/2}\left(\int_{t/2}^{\infty}\int_{I_i}u_t^2{dxds}\right)^{1/2}\\
&\leq C\left(e^{-t/(2C)}-e^{-(1-t)/C}\right)^{1/2}+C\left(\int_{t/2}^{\infty}\int_{I_i}u_t^2{dxds}\right)^{1/2}\rightarrow  0\quad \text{ as }~~ t \rightarrow  \infty.
\end{align*}
From the above estimates, we have 
\begin{equation}\label{eq:large-g}
g_i(t) \rightarrow  0\;\text{ as }t \rightarrow  \infty,\quad i=1,2,3. 
\end{equation}
By \eqref{ns-lagrange}$_1$ and \eqref{eq:add-b0}, it yields
\begin{align*}
\displaystyle  \int_{I_1} (v(x,t)-\alpha_0)dx&=\int_{I_1} (v_0(x)-\alpha_0)dx+\int_{I_1}u_xdx\\
&\leq C\sqrt{L-l}f(t)^{\frac12}.%\rightarrow  0\;\text{ as }t \rightarrow  \infty . 
\end{align*}
Moreover, for any $x,y\in I_1$, it holds
\begin{align*}
\displaystyle  v(y,t)-\alpha_0%=v(y)-v(x)+v(x)-\alpha_0
=\int_x^y v_x(s,t)ds+v(x,t)-\alpha_0.
\end{align*}
Integrating the above equation with respect to $x$ over $I_1$ yields
\begin{align*}
\displaystyle  |v(y,t)-\alpha_0|&=\frac{1}{L-l}\Big|\int_{I_1}\int_x^y v_x(s,t)dsdx+\int_{I_1}(v(x,t)-\alpha_0)dx\Big|\\
&\leq C(L,l)g_1(t)^{1/2}+C(L,l)f(t)^{1/2}\rightarrow  0\;\text{ as }t \rightarrow  \infty,
\end{align*}
and therefore that $(v-\tilde{v})(\cdot,t) \rightarrow  0$ in $L^\infty(I_1)$ as $t \rightarrow  \infty$. A similar arguments hold on $I_2$ and $I_3$.
So that 
\begin{equation}\label{eq:large-v}
\| (v-\tilde{v})(\cdot ,t) \|_{{L}^{\infty }}\rightarrow  0\;\text{ as }t \rightarrow  \infty .
\end{equation}
The large time behavior of $(v,u)$ follows from \eqref{eq:large-v} and \eqref{eq:large-u}.
This completes the proof of Theorem \ref{thm1}. $\hfill\square$

 \section*{Acknowledgements}
The research of Y. Chen is supported by the National Natural Science Foundation of China (No.12471207). 
The research of Q. He is supported by the National Natural Science Foundation of China (No.12371434) and the National key R \& D Program of China (No.2022YFE03040002). 
The research of D. Niu is supported by Tianyuan Fund for Mathematics of the National Natural Science Foundation of China (No. 12526429) and the Natural Science Foundation of Beijing Municipality, China (No. 1252004).
The research of Y. Peng is supported by the National Natural Science Foundation of China (Nos.12301266).

\end{document}